\newtheorem{thm}{Theorem}[section]
\newtheorem{cor}[thm]{Corollary}
\newtheorem{lem}[thm]{Lemma}
\newtheorem{prop}[thm]{Proposition}
\theoremstyle{definition}
\newtheorem{defn}[thm]{Definition}
\theoremstyle{remark}
\newtheorem{remark}[thm]{Remark}
\newtheorem{exa}[thm]{Example}
\newtheorem{exas}[thm]{Examples}
\numberwithin{equation}{section}
\numberwithin{equation}{section}
\newcommand{\delete}[1]{} % Comment out text.
\newcommand{\sig}{\sigma}
\def\al{\alpha}
\def\ga{\gamma}
\newcommand{\Ga}{\Gamma}
\newcommand{\del}{\delta}
\newcommand{\la}{\lambda}
\newcommand{\ka}{\kappa}
\newcommand{\graphh}{\rm{graph}}
\def\R {{\mathbb R}}
\def\N {{\mathbb N}}
\def\Z {{\mathbb Z}}
\def\Q {{\mathbb Q}}
\def\T {{\mathbb T}}
\def\Ocal{{\mathcal O}}
\def\Xcal{{\mathcal X}}
\def\Ucal{{\mathcal U}}
\def\GG{{\mathfrak{G}}}
\def\ol{\overline}
\newcommand{\br}{\vspace{4 mm}}
\newcommand{\id}{{\rm{id\,}}}
\def\Aut{{\mathrm Aut}\,}
\def\End{{\mathrm End}\,}
\def\Homeo{{\mathrm{Homeo}}\,}
\def\nor{\triangleleft}
\begin{document}

\title{On the virtual automorphism group of a minimal flow}

\author{Joseph Auslander}
\address{Mathematics Department, The University of Maryland,  College Park, MD 20742, USA}
\email{jna@math.umd.edu}

%    Information for second author
\author{Eli Glasner}
\address{Department of Mathematics, Tel Aviv University, Tel Aviv, Israel}
\email{glasner@math.tau.ac.il}
%\thanks{Support information for the second author.}

\subjclass[2010]{Primary 37B05, 54H20}

\keywords{semiregular minimal flow, Ellis group, Morse minimal set, automorphism group}

\date{June 9, 2019}

\begin{abstract}
We introduce the notions ``virtual automorphism group" of a minimal flow
and ``semi-regular flow" and investigate the relationship between the virtual
and actual group of automorphisms. 
\end{abstract}

\maketitle

\section*{Introduction}

The notion of a virtual subgroup was introduced in ergodic theory by W.A. Mackey
(see e.g. \cite{M}). Here is a brief description of the basic idea behind this
highly technical notion, as described by R. Zimmer \cite{Zi}.

\begin{quote}

If $X$ is an ergodic $G$-space, one of two mutually exclusive statements holds:

(i) There is an orbit whose complement is a nullset. In this case, $X$ is called essentially transitive.

(ii) Every orbit is a nullset. $X$ is then called properly ergodic.

In the first case, the action of $G$ on $X$ is essentially equivalent to the action defined by translation on $G/H$, 
where $H$ is a closed subgroup of $G$; 
furthermore, this action is determined up to equivalence by the conjugacy class of $H$ in $G$.
In the second case, no such simple description of the action is available, 
but it is often useful to think of the action as being defined by a ``virtual subgroup" of $G$. 
Many concepts defined for a subgroup $H$, can be expressed in terms of the action of $G$ on $G/H$;
 frequently, this leads to a natural extension of the concept to the case of an arbitrary virtual subgroup, 
 i.e., to the case of an ergodic $G$-action that is not necessarily essentially transitive. 
 Perhaps the most fundamental notions that can be extended in this way are those of a homomorphism, and the concomitant ideas of kernel and range. 
 These and other related matters are discussed in \cite{M}.
 
 \end{quote}

In some sense the concept of the ``Ellis group associated to a pointed minimal flow" 
(see Section \ref{sec-basic}) is an analogue of Mackey's virtual group in topological dynamics.
It became a keystone object in the abstract theory of topological dynamics, which
was developed by R. Ellis and collaborators in the 60's and 70's of the last century
(see e.g. \cite{E69}, \cite{EGS} and \cite{V}).

If one carries this idea a bit further, and one thinks of $A$, the Ellis group of a minimal flow $(X,T)$,
as a virtual subgroup, then the group $N_G(A)/A$, where $N_G(A)$ is the normalizer of $A$ in
the ambient group $G$ ( a subgroup of the enveloping semigroup of $(X,T)$), can be thought of as the ``virtual automorphism group" of the flow $(X,T)$.

In the present work we make this notion precise (Section \ref{sec-virtual}) 
and investigate the question of realization of the virtual automorphism group
as an actual group of flow automorphisms.

We thank, Ethan Akin and Andrew Zucker for their helpful comments.

\br

\section{Some notations and basic facts concerning minimal flows}\label{sec-basic}

In this work $T$ denotes an arbitrary (discrete) group. A {\em $T$-flow} 
$(X,T)$ on a compact Hausdorff space $X$ is given by a homomorphism
$\rho :T \to \Homeo(X)$ of $T$ into the group of self homeomorphisms of $X$.
We usually suppress the homomorphism $\rho$ from our notation of a flow (even when 
$\rho$ is not an injection) and we write $tx$ for the image of the point $x \in X$
under the homeomorphism $\rho(t)$ ($t \in T$).

In the next few paragraphs we will survey some of the basic definitions and facts
from the theory of abstract topological dynamics which will be repeatedly used in this work.
This  theory started with the classical monograph by Gottschalk and Hedlund \cite{GH} and was 
then greatly developed  by R. Ellis.
For more details we refer to the following monographs:
\cite{E69}, \cite{Gl76}, \cite{A88} and \cite{dV93}.

\br

The flow $(X,T)$ is {\em minimal} if every point in $X$ has a dense orbit. 
A pair of points $x, x' \in X$ is {\em proximal} if there is a net $t_i \in T$ and a point $z \in X$
such that $\lim t_i x = \lim t_i x' =z$.
We write $P[x]$ for the proximal cell of $x$ (i.e. the set of points proximal to $x$).
A point $x \in X$ is a {\em distal point} if it is proximal only to itself : $P[x] = \{x\}$.
A minimal flow is {\em point distal} if there is at least one distal point in $X$,
and it is {\em distal} if every point is distal.
Ellis has shown that in a metric minimal flow the existence of one distal point
implies that the set $X_0 \subseteq X$ of distal points is a dense $G_\del$ set.
A continuous map $\pi : (X,T) \to (Y,T)$ between two minimal flows is a {\em homomrphism}
(or an {\em extension}) 
if it intertwines the $T$-actions ($t\pi(x) = \pi(tx), \ \forall x \in X, t \in T$).
We say that the homomorphism is {\em proximal} if for every $y \in Y$ every pair of points in
$\pi^{-1}(y)$ is proximal, and that it is {\em distal}
if for every $y \in Y$  we have $P[x] \cap \pi^{-1}(y) = \{x\}, \ \forall x \in \pi^{-1}(y)$.

 \br

The {\em enveloping semigroup} of the flow $(X,T)$, denoted by $E(X,T)$,
is the closure of the set $\{\rho(t) : t \in T\}$ in the compact space $X^X$. 
This is indeed a compact subsemigroup of the semigroup (under composition of maps) $X^X$, and thus
for any fixed $p \in E(X,T)$,
right multiplication by $p$, $R_p : q \mapsto qp, \ (q \in E(X,T))$ is continuous on $E(X,T)$.
Left multiplication $L_p : q \mapsto pq,\ q \in E(X,T)$ is however often highly non-continuous
(usually not even measurable) unless $p$ is a continuous map. As the elements of $T$ are
continuous maps the homomorphism $t \mapsto L_t \ (t \in T)$ makes $E(X,T)$
a  $T$-flow.

It is well known that the semigroup $\beta T$, the Stone-\v{C}ech compactification of the discrete $T$,
is the universal point transitive $T$-flow and therefore also 
a universal enveloping semigroup. We will use this universality and often consider elements of $\beta T$ 
as maps in the enveloping semigroup of each and every $T$-flow under consideration.
The semigroup $\beta T$ admits many (for infinite $T$) minimal left ideals 
(which coincide with the minimal subflows). 
All these ideals are isomorphic to each other both as compact right topological semigroups and as minimal flows.
As usual, we will fix a minimal ideal  $M$ of $\beta T$. The universality of $\beta T$ 
implies that $(M,T)$ is a universal minimal flow. Ellis has shown that as a flow
$(M,T)$ is {\em coalescent}; i.e every endomorphism of $(M,T)$ is an automorphism, and thus
up to an automorphism $(M,T)$ is the unique universal minimal flow.
Each minimal ideal contains (usually many) idempotents and for convenience we usually fix one
such idempotent $u = u^2 \in M$. We denote the collection of idempotents in $M$ by the letter $J$.

It turns out that the set $G = uM \subset M$ is actually a group and moreover,
via the representation $g \mapsto R^{-1}_g, \ g \in G$, this group is isomorphic to 
the group $\Aut(M,T)$ of automorphisms of the flow $(M,T)$.
$M$ is the disjoint union of the collection of groups $\{vM : v \in J\}$ and 
each member $p$ of $M$ has a unique representation $p = vg$ where $v= v^2$ is an idempotent
in $M$ and $g$ is in $G$. We sometimes write $p^{-1}$ for $vg^{-1}$; this is
indeed the inverse element of $p$ in the group $vG$. 

If $(X,T)$ is minimal then for every $x \in X$ there is an idempotent $v \in M$ such that $vx =x$.
In other words $X = \bigcup \{vX : v \in J\}$. However, whereas $M = \bigcup \{vM : v \in J\}$
is a disjoint union, usually the sets $vX$ are not necessarily disjoint.
For example, a point $x $ in a minimal flow $(X,T)$ is distal iff $vx =x$ for every $v \in J$.
In particular a minimal flow $(X,T)$ is distal iff $X = uX = vX$ for all the idempotents $v \in J$.
Thus, in a minimal distal flow $E(X,T)$ is a group.

A minimal flow with a distinguished point $x_0 \in X$ is called a {\em pointed flow}
and we usually assume that $ux_0 = x_0$; i.e. $x_0 \in uX$.
We write 
$$
\mathfrak{G}(X,x_0) = \{g \in G : gx_0 = x_0\}.
$$
This subgroup of $G$ is called the {\em Ellis group} of the pointed flow $(X, x_0, T)$.
It is easy to check that for $g \in G$ we have
$$
\mathfrak{G}(X,gx_0) = g\mathfrak{G}(X,x_0)g^{-1}. 
$$

In the sequel we will often use the following fact. A homomorphism 
$\pi : (X,x_0,T) \to (Y,y_0,T)$ of pointed minimal flows is a proximal homomorphism iff
$\GG(X,x_0) = \GG(Y,y_0)$.

Note that when the group $T$ is abelian we have $tp = pt$ for every $t \in T$ and $p \in E(X,T)$.
In particular it follows that the subgroup $\{tu : t \in T\} \subset G$ is dense in $M$,
and for every minimal flow $(X,T)$ the subset $uX = G x_0 \cong G/A$ is dense in $X$.
More generally, following \cite{Gl75}, a minimal flow $(X,T)$ is called
{\em incontractible} when for every $k \in \N$ the set of almost periodic points is dense
in the product flow $(X^k, T)$ (equivalently when $u \circ uX = X$, see Section \ref{extensions} below).
The group $T$ is called {\em strongly amenable} \cite{Gl75}, when $(M,T)$ is incontractible
(equivalently, when $T$ admits no non-trivial minimal proximal flows).
The interpretation of the group $A =\GG(X,x_0)$ as a ``virtual group" is more meaningful for 
strongly amenable groups $T$,
as then, for every minimal flow $(X,T)$ the homogeneous space $uX = Gx_0 \cong G/A$
 ``approximates" the flow $X$.
(See \cite{FTVF} for a characterisation of strongly amenable groups.)
However, if we consider the relative theory (where one studies the collection 
of all the minimal extensions of a fixed minimal flow) the corresponding analogy 
will make sense for every acting group $T$.

\br

\section{The group of virtual automorphisms of a minimal flow}\label{sec-virtual}

Given a flow $(X,T)$ we let $\Aut(X,T)$ denote its group of automorphisms and $\End(X,T)$
its semigroup of endomorphisms. 
Given a subgroup $A < G$ we write
$N_G(A) = \{h \in G : h^{-1}A h = A\}$ and $N'_G(A) = \{h \in G : h^{-1}A h \subseteq A\}$

\begin{prop}\label{P1}
Let $(X,x_0,T)$ be a pointed minimal flow, $A = \mathfrak{G}(X,x_0)$, and let $\psi$ be an endomorphism 
of the flow $(X,T)$. Then there is an element $h \in N'_G(A)$ such that
$\psi = \phi_h$,
where for every $p \in M$,
$$
\phi_h(px_0) = phx_0.
$$
If $\psi$ is an automorphism then $h \in N_G(A)$ and $\psi^{-1} = \phi_{h^{-1}}$.
\end{prop}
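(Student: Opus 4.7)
My plan is first to upgrade the $T$-equivariance of $\psi$ to equivariance under the full enveloping semigroup, then to produce $h$ by evaluating $\psi$ at the basepoint, and finally to verify the normalizer conditions by short algebraic manipulations inside $M$. The only conceptual step is the first one: if $t_i\to p$ in $E(X,T)\subseteq X^X$, then continuity of $\psi$ together with $T$-equivariance gives
$$
\psi(px)=\lim\psi(t_ix)=\lim t_i\psi(x)=p\psi(x)
$$
for every $x\in X$ and every $p\in E(X,T)$; in particular this holds for every $p\in M$.

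Next, apply this to $x=x_0$ and $p=u$: since $ux_0=x_0$, one obtains $\psi(x_0)=u\psi(x_0)\in uX$. Because $X=Mx_0$ by minimality and $uM=G$, we have $uX=Gx_0$, so there exists $h\in G$ with $\psi(x_0)=hx_0$. The $M$-equivariance from above then yields $\psi(px_0)=p(hx_0)=(ph)x_0=\phi_h(px_0)$ for every $p\in M$, and well-definedness of this formula on $X$ is automatic because it coincides with the genuine function $\psi$. For the normalizer condition, any $a\in A$ satisfies $ax_0=x_0$, hence $ahx_0=a\psi(x_0)=\psi(ax_0)=\psi(x_0)=hx_0$; multiplying on the left by the group inverse $h^{-1}\in G$ (using that the $M$-action on $X$ is a semigroup action, that $h^{-1}h=u$, and that $ux_0=x_0$) gives $h^{-1}ahx_0=x_0$, i.e.\ $h^{-1}ah\in A$, so $h^{-1}Ah\subseteq A$.

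If $\psi$ is an automorphism, I run the same construction on $\psi^{-1}$ to obtain some $k\in N'_G(A)$ with $\psi^{-1}=\phi_k$. Composition gives $pkhx_0=px_0$ for all $p\in M$, so $kh\in A$, and symmetrically $hk\in A$; the latter yields $kx_0=(h^{-1}hk)x_0=h^{-1}x_0$, so $\phi_k$ and $\phi_{h^{-1}}$ agree at $x_0$ and hence, by $M$-equivariance, on all of $X=Mx_0$. Thus $\psi^{-1}=\phi_{h^{-1}}$, and applying the normalizer computation of the previous paragraph to $\psi^{-1}$ (with $h^{-1}$ now playing the role of $h$) gives $hAh^{-1}\subseteq A$; combined with $h^{-1}Ah\subseteq A$ this forces $h^{-1}Ah=A$, so $h\in N_G(A)$. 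I do not anticipate a real obstacle beyond keeping the algebra in $M$ consistent; the one point worth emphasizing is the automatic $E(X,T)$-equivariance of any continuous $T$-endomorphism of a compact flow, which is what lets $\psi$ be captured by a single element $h$ of the (virtual) group $G$.
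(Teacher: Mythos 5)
Your proposal is correct and follows essentially the same route as the paper: evaluate $\psi$ at $x_0$ to get $h\in G$, use $E(X,T)$-equivariance of the continuous $T$-equivariant map $\psi$ to get $\psi(px_0)=phx_0$, and then derive the normalizer conditions from $ahx_0=hx_0$ and the symmetric computation for $\psi^{-1}$. The only difference is that you spell out the limit argument for $\psi(px)=p\psi(x)$ and route the inverse through an auxiliary $k$ rather than computing $\psi^{-1}(x_0)=h^{-1}x_0$ directly, which is a matter of exposition, not substance.
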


\begin{proof}
Because $\psi$ is an endomorphism it commutes with $u$, hence $u\psi(x_0) 
= \psi(ux_0) = \psi(x_0) \in uX = Gx_0$,
and there exists an element $h \in G$ such that $\psi(x_0) = hx_0$.
Now for every $p \in \beta T$ (or $p \in E(X,T)$) we have
%\begin{gather*}
%hx_0 = \psi(x_0) = \psi(ax_0) = a\psi(x_0) = ahx_0 \\
%h\psi^{-1}(x_0) = \psi^{-1}(hx_0) = \psi^{-1}(\psi(x_0)) = x_0
%\end{gather*}
$$
\psi(px_0) = p \psi(x_0) = p hx_0.
$$
If $a \in A$ then
$$
hx_0 = \psi(x_0) = \psi(ax_0) = a\psi(x_0) = ahx_0,
$$
%$\psi(x_0) = \psi(ax_0) = ahx_0 = hx_0$, 
hence $h^{-1}ah \in A$,
so that $h \in N'_G(A)$.

If $\psi$ is an automorphism then 
$$
h\psi^{-1}(x_0) = \psi^{-1}(hx_0) = \psi^{-1}(\psi(x_0)) = x_0,
$$
hence $\psi^{-1}(x_0) = h^{-1} x_0$, so that, as above, $\psi^{-1} = \phi_{h^{-1}}$,
and also $hAh^{-1} \subseteq A$, hence  $h^{-1}Ah = A$.
\end{proof}

\begin{prop}\label{P2}
Let $(X,x_0,T)$ be a pointed minimal flow, $A = \mathfrak{G}(X,x_0)$ its Ellis group.
\begin{enumerate}
\item
Let  $h \in G$ be such that the map $\phi_h$,
where for every $p \in M$
$$
\phi_h(px_0) = phx_0,
$$
is well defined. Then $h \in N'_G(A)$ and 
$\phi_h$ is a continuous endomorphism; i.e. $\phi_h \in End(X,T)$.
\item
If $h \in G$ is such that both maps $\phi_h$ and $\phi_{h^{-1}}$ are well defined,
then $h \in N_G(A)$ and $\phi_h \in \Aut(X,T)$.
\end{enumerate}
\end{prop}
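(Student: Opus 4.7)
The plan is to derive both parts from a single continuity principle: the map $\alpha\colon M\to X$ given by $\alpha(p)=px_0$ is a topological quotient map, so any well-defined function $X\to X$ that lifts through $\alpha$ to a continuous map $M\to X$ is automatically continuous. First I would note that $Mx_0=X$: the image $\alpha(M)$ is compact, contains $x_0=ux_0$, and is $T$-invariant because $tM\subseteq M$ for every $t\in\beta T$, so minimality forces $Mx_0=X$. Hence $\phi_h$ is a function defined on all of $X$ in both parts.

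For (1), I would check continuity and equivariance, then establish $h\in N'_G(A)$. Both maps $\alpha(p)=px_0$ and $\beta(p)=phx_0=p(hx_0)$ are continuous, as evaluations of elements of the enveloping semigroup at fixed points of $X$. Since $M$ is compact and $X$ Hausdorff, $\alpha$ is a closed surjection and hence a topological quotient; well-definedness of $\phi_h$ says precisely that $\beta$ factors through $\alpha$ as $\beta=\phi_h\circ\alpha$, and the universal property of quotient maps delivers continuity of $\phi_h$. Equivariance follows from $\phi_h(t\cdot px_0)=(tp)hx_0=t(phx_0)=t\phi_h(px_0)$. For the normalizer condition, take $a\in A$: since $ax_0=x_0=ux_0$ and $\phi_h$ is well defined, $ahx_0=uhx_0=hx_0$ (using $uh=h$ because $h\in G=uM$, together with $ux_0=x_0$); acting by $h^{-1}$ on both sides gives $(h^{-1}ah)x_0=(h^{-1}h)x_0=ux_0=x_0$, so $h^{-1}ah\in A$, proving $h\in N'_G(A)$.

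For (2), I would apply (1) to both $h$ and $h^{-1}$, obtaining $\phi_h,\phi_{h^{-1}}\in\End(X,T)$ together with $h^{-1}Ah\subseteq A$ and $hAh^{-1}\subseteq A$; combining these yields $h^{-1}Ah=A$, so $h\in N_G(A)$. Finally, for every $p\in M$,
$$\phi_h(\phi_{h^{-1}}(px_0))=\phi_h(ph^{-1}x_0)=ph^{-1}hx_0=pux_0=p(ux_0)=px_0,$$
using that $h^{-1}h=u$ is the identity of the group $G=uM$ and $ux_0=x_0$; the symmetric composition is handled identically. Hence $\phi_h\in\Aut(X,T)$ with inverse $\phi_{h^{-1}}$.

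The main obstacle is the continuity of $\phi_h$ in (1): the formula defining it involves left multiplication in $M$, which is generally discontinuous, so a direct pointwise argument would be awkward. The clean route is to recognize $\alpha$ as a topological quotient map (the compact/Hausdorff structure makes this automatic), at which point well-definedness converts directly into continuity. Everything else is a brief algebraic computation using the standard identities $uh=h$ and $h^{-1}h=u$ in $G=uM$ together with $ux_0=x_0$.
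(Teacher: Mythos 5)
Your proof is correct and takes essentially the same approach as the paper's: the algebraic steps identifying $h\in N'_G(A)$ (respectively $h\in N_G(A)$) and verifying $\phi_{h^{-1}}=\phi_h^{-1}$ are the same, and your continuity argument---that the evaluation $p\mapsto px_0$ is a closed, hence quotient, map from the compact $M$ onto the Hausdorff $X$---is just a repackaging of the paper's observation that the graph of $\phi_h$ is the continuous image of the compact graph of right multiplication $R_h$ and is therefore closed. Both rest on the identical compactness facts, so nothing further is needed.
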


\begin{proof}
(1) We have for $a \in A$, $h x_0 = \phi_h(x_0) = \phi_h(ax_0) = ahx_0$, whence
$h^{-1}ah \in A$; i.e. $h \in N_G'(A)$.
Let $\pi : M \to X$ be the evaluation map $p \mapsto px_0, \ (p \in M)$. Let
$R_h : M \to M$ denote  right multiplication by $h$ and let
$$
L = (\pi \times \pi)({\graphh}(R_h)) =  (\pi \times \pi)\{(p, ph) : p \in M\} =
\{(px_0, phx_0) : p \in M\}.
$$
By our assumption $L$ is a graph of a map $\phi_h : X \to X, \ px_0 \mapsto phx_0, \ (p \in M)$.
Since the graph of $R_h$ is $T$-invariant ($R_h$ commutes with the elements of $T$),
we deduce that also $\phi_h$ commutes with the $T$-action.
Since $L$ is a closed subset of $M \times M$, it follows that  the map $\phi_h$ is continuous.
Thus $\phi_h \in \End(X,T)$.

(2) If $\phi_{h^{-1}}$ is also well defined then, as above, $hAh^{-1} \subseteq A$, hence $h \in N_G(A)$,
and we have $\phi_{h^{-1}} = (\phi_h)^{-1}$, so that $\phi_h \in \Aut(X,T)$.
\end{proof}

\begin{exa}
A minimal flow $(X,T)$ is said to be {\em coalescent} if
every endomorphism of $(X,T)$ is an automorphism.
This notion was introduced in \cite{A63}
where an example of a non-coalescent minimal $\Z$-flow was described.
This example, constructed as a factor of Ellis' ``two circles" flow, is not metrizable.
However a slight modification of the argument there will produce a
non-coalescent minimal subshift. 
Later, in \cite{D} Downarowicz constructed a Toeplitz flow $(X,T)$ which is not coalescent; i.e.
it admits an endomorphism which is not an automorphism.
These flows are both  almost one-to-one extension of their maximal equicontinuous factor 
$\pi : X \to Y$ (which is an irrational rotation in the modified Auslander example and an adding machine
in the Toeplitz flow).
This implies the following facts: (i) For any choice of a base point $x_0 \in uX$,
$\mathfrak{G}(X, x_0) = A    $ is a normal subgroup of $G$
(i.e. $N_G(A) = G$). (ii) The extension $\pi$ is proximal, whence
$\mathfrak{G}(X, x_0) = \mathfrak{G}(Y, y_0) = A$ (with $y_0 = \pi(x_0)$).
Now let $\phi$ be an endomorphism of $(X,T)$ which is not one-to-one.
As in Proposition \ref{P1} we have $\phi = \phi_h$ for some $h \in G$
and, although here $h^{-1}Ah =A$, yet $\phi_h$ is not an automorphism.
This phenomenon however can not occur for distal flows, as we can see in the following proposition.
\end{exa}

\begin{prop}\label{P3}
Let $(X,T)$ be a minimal distal flow. If $h \in N_G(A)$ is such that
the map $\phi_h(px_0) = phx_0$ is well defined then it is an automorphism.
\end{prop}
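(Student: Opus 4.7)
The plan is to exploit the fact that in a distal minimal flow one has $X = uX = Gx_0$, so the evaluation map $g \mapsto gx_0$ sets up a bijection between the coset space $G/A$ and \emph{all} of $X$ (not merely a dense subset, as in the general case). Under this identification $\phi_h$ becomes right translation $gA \mapsto ghA$ on $G/A$, which is visibly a bijection when $h \in N_G(A)$.

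First I would apply Proposition \ref{P2}(1): since $\phi_h$ is assumed well defined, it is already a continuous endomorphism of $(X,T)$, so it suffices to check that $\phi_h$ is bijective. Next I would invoke distality: every $x \in X$ satisfies $ux = x$, and writing $x = px_0$ for some $p \in M$ (by minimality) gives $x = (up)x_0$ with $up \in uM = G$. Thus every point of $X$ has the form $gx_0$ for some $g \in G$.

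Surjectivity then follows immediately: given $y = gx_0$, I set $g' = gh^{-1} \in G$ (using that $G$ is a group) and compute $\phi_h(g'x_0) = g'hx_0 = gx_0 = y$. For injectivity, suppose $\phi_h(g_1x_0) = \phi_h(g_2x_0)$ with $g_1,g_2 \in G$; then $g_1 h x_0 = g_2 h x_0$, so $(g_1h)^{-1}(g_2h) = h^{-1}(g_1^{-1}g_2)h$ lies in $A$. The hypothesis $h \in N_G(A)$ gives $h^{-1}Ah = A$, hence $g_1^{-1}g_2 \in A$, so $g_1 x_0 = g_2 x_0$.

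There is essentially no serious obstacle here; the key point is that distality upgrades the general inclusion $Gx_0 \subseteq X$ to an equality, so the right-translation map on $G/A$ sees \emph{all} of $X$ and one does not have to contend with the phenomenon illustrated in the preceding Example, where a non-injective endomorphism $\phi_h$ can arise even though $h^{-1}Ah = A$.
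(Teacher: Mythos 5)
Your proposal is correct and follows essentially the same route as the paper: the heart of the matter in both is the injectivity computation showing that $g_1hx_0=g_2hx_0$ forces $h^{-1}(g_1^{-1}g_2)h\in A$ and hence $g_1^{-1}g_2\in A$ by normality, using $X=Gx_0$ for distal flows. Your explicit surjectivity check is harmless but redundant (any endomorphism of a minimal flow is surjective, since its image is a closed invariant set), and the paper likewise leaves continuity of the inverse to compactness.
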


\begin{proof}
For distal $(X,x_0,T)$ we have $Gx_0 = X$. 
Suppose $gx_0 \not = g'x_0$ for some $g, g' \in G$ but $\phi_h(gx_0) = \phi_h(g'x_0)$. Then
$gx_0 \not = g'x_0$ implies that $b =g^{-1}g' \not\in A$, and
$ghx_0 = g'hx_0$, hence $x_0 = h^{-1}g^{-1}g' hx_0$, implies that $ h^{-1}b h \in A$. 
Thus $b \in hAh^{-1} \setminus A$, contradicting our assumption that $h \in N_G(A)$.
\end{proof}

We call  $N_G(A)/A$ the group of  {\em virtual automorphisms} of the minimal flow $(X,T)$.

\br

\section{Semi-regular flows}
The notion of regular minimal flows was introduced in \cite{A66}. 
A minimal flow $(X,T)$ is {\em regular} if for any pair of points $x, y \in X$ there 
is an automorphism $\psi \in \Aut(X,T)$ such that the pair $(x, \psi(y))$ is proximal;
iff  for every almost periodic point $(x, y) \in X^2$, there is an
automorphism $\psi$ of $X$ such that $y = \psi(x)$.
Equivalently we can say that $(X,T)$ is regular iff 
(for $x_0 \in uX = Gx_0$)
for every $g \in G$, there is an automorphism
$\psi \in \Aut(X,T)$ such that $\psi(x_0) = gx_0$
(recall that $\psi \circ u = u \circ \psi$ and that for every $x \in X$ the point $x$ is proximal to $ux$).

\begin{defn}
We say that a pointed minimal flow $(X,x_0,T)$ 
(with $\mathfrak{G}(X,x_0)=A$), is {\em semi-regular} (SR for short) if 
for every $h \in N_G(A)$ there is an automorphism $\psi \in \Aut(X,T)$ such that
$\psi(x_0)= hx_0$.
More generally, given a subgroup $\Ga < N_G(A)$, we say that 
$(X,x_0,T)$ is {\em $\Ga$-semi-regular} if 
for every $h \in \Ga$ there is an automorphism $\psi \in \Aut(X,T)$ such that
$\psi(x_0)= hx_0$. (See also \cite{H}.)
Clearly then a minimal flow $(X,T)$ is regular iff it is SR with $A \nor G$.
\end{defn}

Thus a minimal flow is SR iff every virtual automorphism of $(X,T)$ is realized, so that
$N_G(A)/A \cong \Aut(X,T)$. 

\begin{remark}
The dependence on the various choices we made in order to formulate the definition of the SR 
property; namely the choice of $M$ in $\beta T$, the choice of $u$ in $J$ and finally the choice
of $x_0$ in $uX$, are either immaterial or have the effect of replacing a subgroup of $G$
by some conjugate. 
\end{remark}

\begin{exas}
\begin{enumerate}
\item
Every regular flow is SR (clear).
\item
Every minimal distal flow is SR (see Section \ref{distal}).
\item
Every minimal proximal flow is SR (a minimal proximal flow is regular).
\item
If $(X,T)$ is SR then so is its maximal highly proximal extension (see Section \ref{extensions}).
\item
The Morse minimal set is SR (see Section \ref{Morse}).
\item 
The Sturmian minimal set is not SR (see Section \ref{distal}).
\item
Toeplitz flows are not SR (see Section \ref{distal}).
\item
More generally a metrizable almost automorphic flow which is not equicontinuous is never SR
(see Section \ref{distal}).
\item
A regular point distal minimal flow is distal (see Section \ref{distal}).
\item
Every minimal flow admits a proximal extension which is SR (see Section \ref{extensions}).
\end{enumerate}
\end{exas}

\begin{remark}
We note that an element $g \in G$ belongs to $N_G(A)$ iff $\GG(X, gx_0) =\GG(X,x_0) =A$.
For a commutative $T$ we have $\GG(X,tx_0) =A$ for every $t \in T$
(hence every $tu \in G$). Thus for commutative $T$ the set 
$\{gx_0 : g \in G \ \&\  \GG(X,gx_0) = A\} \supset Tx_0$, is always dense in $X$.
\end{remark}

One can easily relativize the notion of SR.

\begin{defn}\label{def-SRext}
Let $\pi : (X,x_0,T) \to (Y,y_0,T)$ be a homomorphism of pointed minimal flows.
Let $\GG(X,x_0) = A < \GG(Y,y_0) = F$ (where $y_0 = \pi(x_0)$).
We say that $\pi$ is an {\em SR extension} if for every $h \in N_F(A) =  F \cap  N_G(A)$
the map $\phi_h : X \to X$ defined by 
$\phi_h(px_0) = phx_0$ is a well defined automorphism of $(X,T)$.
\end{defn}

\section{Distal minimal flows are semi-regular}\label{distal}

\begin{thm}
Every minimal distal flow is SR.
\end{thm}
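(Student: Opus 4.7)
The plan is to reduce the SR property for a minimal distal flow $(X,T)$ to the well-definedness of the map $\phi_h$ for every $h\in N_G(A)$, and then invoke Proposition \ref{P3} (or equivalently Proposition \ref{P2}(2)) to obtain the desired automorphism. The essential ingredients I will use are the facts stated in Section \ref{sec-basic}: that in a minimal distal flow $X=uX=Gx_0$, and that every idempotent $v\in J$ fixes every point of $X$.

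First, I would observe that for distal $(X,T)$ the action of an arbitrary $p\in M$ on $x_0$ depends only on the ``group part'' of $p$. Indeed, writing $p=vg$ with $v\in J$ and $g\in G$, distality gives $vy=y$ for every $y\in X$, so $px_0=vgx_0=gx_0$ and similarly $phx_0=vghx_0=ghx_0$. Thus $\phi_h$ is determined by the formula $\phi_h(gx_0)=ghx_0$ for $g\in G$, and its well-definedness as a map $X\to X$ reduces to showing: if $g,g'\in G$ satisfy $gx_0=g'x_0$, then $ghx_0=g'hx_0$.

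Next I would verify this implication using $h\in N_G(A)$. Since $g^{-1}g\in G$ equals the idempotent $u$ of the group $G=uM$ and $ux_0=x_0$, the equality $gx_0=g'x_0$ is equivalent to $g^{-1}g'\in A$. Now $h\in N_G(A)$ means $h^{-1}Ah=A$, so $h^{-1}(g^{-1}g')h\in A$, i.e.\ $(gh)^{-1}(g'h)\in A$, which in turn gives $ghx_0=g'hx_0$. This establishes that $\phi_h$ is a well-defined map.

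Finally, since $h\in N_G(A)$, Proposition \ref{P3} immediately yields that $\phi_h\in\Aut(X,T)$; as $\phi_h(x_0)=hx_0$, this exhibits the automorphism required by the definition of SR. (Alternatively, the same argument applied to $h^{-1}\in N_G(A)$ shows that $\phi_{h^{-1}}$ is also well-defined, so Proposition \ref{P2}(2) directly gives $\phi_h\in\Aut(X,T)$.) I do not expect any real obstacle here: once the reduction from $M$ to $G$ via distality is made, the well-definedness is a one-line consequence of the normalizer condition, and the existing Propositions \ref{P2} and \ref{P3} do the remaining work of promoting well-definedness to continuity, injectivity, and surjectivity.
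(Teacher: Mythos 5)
Your proof is correct and takes essentially the same route as the paper's: reduce the well-definedness of $\phi_h$ to a computation inside the group $G$ using distality, then let the normalizer condition $h^{-1}Ah=A$ do the work, and finish via the earlier propositions (the paper likewise runs the same argument for $\phi_{h^{-1}}$ to get the automorphism). The only cosmetic difference is the reduction step: you use the decomposition $p=vg$ together with the fact that idempotents act trivially on a distal flow, whereas the paper passes to the enveloping semigroup $E(X,T)$, which is a group in the distal case; the two devices are interchangeable here.
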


\begin{proof}
Let $(X,x_0,T)$ be a minimal distal flow with $A = \mathfrak{G}(X,x_0)$.
We have $X = Gx_0$ and for $h \in N_G(A)$ we 
let $\phi_h : X \to X$ be defined by
$$
\phi_h(px_0) = phx_0, \ (p \in M).
$$
To see that this is well defined suppose $px_0 = qx_0$ for some  $p, q \in M$.
We have to show that $phx_0 = qhx_0$.
Now in a distal flow the enveloping semigroup $E(X,T)$ is a group
and the the image of $M$ under the canonical map from $\beta T$ onto $E(X,T)$ is
surjective. Thus we can consider $p, q $ as elements of the group $G = E(X,T)$.
As $px_0 = qx_0$, it follows that $p^{-1}qx_0 = x_0$,
hence $a = p^{-1}q \in A$. Thus $q = pa$ and we get
$$
\phi_h(qx_0) = qhx_0 = pahx_0 = ph(h^{-1}ah)x_0 = phx_0 = \phi_h(px_0).
$$
The continuity of $\phi_h$ follows from the continuity of right multiplication
$R_h$ on $M$ and it follows that $\phi_h$ is an endomorphism of $(X,T)$.
Finally, as the same argument applies for $\phi_{h^{-1}}$, we
conclude that $\phi_h \in \Aut(X,T)$.
\end{proof}

\begin{exa}
In \cite{PW} the authors construct an example of a minimal metric
cascade $(X,T)$ which is not coalescent.
If $\phi$ is an endomorphism of $(X,T)$ which is not an automorphism,
then $\phi = \phi_h$ for some  $h \in G$ for which $h^{-1}Ah \subsetneq A$,
where $A = \GG(X,x_0)$ for some choice of $x_0 \in X$.
In fact, as we have seen above (Proposition \ref{P3}), in a minimal distal flow
$h^{-1}Ah = A$, would imply that $\phi = \phi_h$ is an automorphism.
\end{exa}

\begin{prop}
A minimal point distal regular flow is distal.
\end{prop}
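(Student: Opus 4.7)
The plan is to exploit two ingredients: (i) regularity upgrades the single distal point $x_0$ to the entire set $uX = Gx_0$, and (ii) every $x \in X$ is proximal to $ux$, so once $uX$ consists of distal points, distality propagates back to every $x \in X$.

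First I would pick a distal point $x_0 \in X$, which exists by point distality. By the characterization recalled in Section \ref{sec-basic}, $x_0$ distal means $vx_0 = x_0$ for every $v \in J$; in particular $ux_0 = x_0$, so $x_0 \in uX$ and (since $X = Mx_0$) we have $uX = uMx_0 = Gx_0$. Next I would apply regularity: for every $g \in G$ there is $\psi \in \Aut(X,T)$ with $\psi(x_0) = gx_0$. Since $\psi$ is continuous and $T$-equivariant and every $v \in J$ is a limit in $\beta T$ of a net $t_i \in T$, the automorphism $\psi$ commutes with $v$ on $X$. Therefore, for every $v \in J$,
$$
v(gx_0) = v\psi(x_0) = \psi(vx_0) = \psi(x_0) = gx_0,
$$
so that $gx_0$ is distal. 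Hence every point of $uX = Gx_0$ is distal.

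To finish I would invoke the classical fact that $x$ and $ux$ are proximal for every $x \in X$: picking a net $t_i \in T$ with $t_i \to u$ in $\beta T$ one has $t_i x \to ux$, and using continuity of $p \mapsto px$ together with $t_i u \to u^2 = u$, also $t_i(ux) = (t_iu)x \to ux$, yielding a common limit. Since $ux \in uX$ is distal, $P[ux] = \{ux\}$, which forces $x = ux$. Thus $X = uX$ and every point of $X$ is distal, proving that $(X,T)$ is distal.

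The only real point to check is that every $\psi \in \Aut(X,T)$ commutes with each element of $E(X,T)$, and in particular with each $v \in J$; this follows from continuity and $T$-equivariance of $\psi$ together with the universal property of $\beta T$. With that in hand, the proof is a straightforward concatenation of the distal-point characterization, the transitivity of $\Aut(X,T)$ on $uX$ provided by regularity, and the proximality-to-$ux$ lemma, so I do not anticipate any serious technical obstacle.
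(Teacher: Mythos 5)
Your proof is correct, but it takes a somewhat longer route than the paper's. The paper argues directly from the proximal-pair form of regularity: given an arbitrary $x \in X$, regularity supplies $\psi \in \Aut(X,T)$ with $\psi(x)$ proximal to the distal point $x_0$, which forces $\psi(x) = x_0$; since automorphisms carry distal points to distal points, $x = \psi^{-1}(x_0)$ is distal, and the proof ends there --- no enveloping-semigroup machinery is needed. You instead use the orbit form of regularity to show that every point of $uX = Gx_0$ is distal (via the correct observation that a continuous equivariant map commutes with every $p \in \beta T$, hence with every idempotent $v \in J$), and then bridge from $uX$ to all of $X$ using the standard fact that $x$ is proximal to $ux$, concluding $X = uX$. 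Both arguments are valid and both hinge on the same two ingredients (regularity plus one distal point); what yours buys is the slightly stronger intermediate statement $X = uX$ made explicit, at the cost of invoking the $\beta T$ formalism where the paper gets by with the bare definitions of proximality and regularity. Every step you flag as needing verification (commutation of $\psi$ with $E(X,T)$, proximality of $x$ and $ux$) is indeed standard and correctly justified.
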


\begin{proof}
%Suppose $g \in G$ is such that $gx_0$ is not a distal point. By regularity, there is $\phi
%\in \Aut(X, T)$ such that $gx_0 = \phi(x_0)$. Let $x \not = gx_0$ be a point which is proximal to 
%$gx_0$. 
%Then the points $\phi^{-1}(x)$ and  $\phi^{-1}(gx_0) = x_0$ are also proximal, 
%but as $x_0$ is a distal point we have $\phi^{-1}(x) = \phi^{-1}(gx_0) = x_0$
%This contradicts the fact that $\phi$ is a bijection. 
%Thus every point in $uX = Gx_0$ is a distal point and it follows that $X$ is distal.
%Let $x$ be an arbitrary point of $X$, then the pair $(x_0,x)$ is almost periodic
%and by regularity $x = \psi(X)$ for some $\psi \in \Aut(X)$. It follows that also $x$ is a distal point.
%Thus $X$ is distal.
Let $x_0$ be a distal point. Given $x \in X$ there is, by regularity, an automorphism $\psi \in \Aut(X)$
such that the points $\psi(x)$ and $x_0$ are proximal. But $x_0$ being a distal point, we have 
$\psi(x) = x_0$, and it follows that $x$ is also a distal point. Thus $X$ is a distal flow.

\end{proof}

\begin{cor}
For abelian $T$ a minimal metric almost automorphic flow which is SR is actually equicontinuous.
Thus Toeplitz flows and Sturmian like flows are not SR.
\end{cor}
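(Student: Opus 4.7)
The plan is to reduce this corollary to the proposition just proved, that a regular point distal minimal flow is distal. The key observation is that under the hypotheses the Ellis group $A$ turns out to be normal in $G$, and this collapses the SR condition into ordinary regularity. First I would unpack the setup: $(X,x_0,T)$ being almost automorphic means there is an almost one-to-one extension $\pi:(X,x_0,T)\to(Y,y_0,T)$ onto the maximal equicontinuous factor. Almost one-to-one extensions of minimal flows are proximal, so $\GG(X,x_0)=\GG(Y,y_0)=A$.

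The main structural step is to show $A\nor G$, which will give $N_G(A)=G$. Since $T$ is abelian and $(Y,T)$ is equicontinuous minimal, $Y$ is a compact abelian topological group on which $T$ acts through a dense homomorphism into translations; every translation of $Y$ therefore commutes with the $T$-action and lies in $\Aut(Y,T)$, so $\Aut(Y,T)$ acts transitively on $Y$. Since automorphisms preserve the Ellis group of a base point, $\GG(Y,y)=A$ for every $y\in Y$. Combined with the identity $\GG(Y,gy_0)=gAg^{-1}$, this forces $gAg^{-1}=A$ for every $g\in G$, i.e.\ $A\nor G$.

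With $N_G(A)=G$ in hand, the SR hypothesis on $X$ is exactly the statement that for every $g\in G$ there is $\phi_g\in\Aut(X,T)$ with $\phi_g(x_0)=gx_0$, which is precisely the definition of regularity. Almost automorphic flows are point distal (any singleton-fibre point of $\pi$ is, by the standard fibre/proximality argument, a distal point of $X$), so the preceding proposition yields that $X$ is distal. Then $\pi:X\to Y$ is simultaneously proximal and distal, hence an isomorphism, and $X\cong Y$ is equicontinuous. Consequently non-trivial Toeplitz flows and Sturmian-like flows, being by construction genuine metric almost automorphic but non-equicontinuous flows over an abelian group, cannot be SR. The only real obstacle is the normality step; everything else is a direct assembly of results already in the paper.
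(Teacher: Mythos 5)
Your proof is correct and follows essentially the same route as the paper: reduce to regularity by showing $A\nor G$ via the abelian equicontinuous factor, invoke the proposition that a regular point distal minimal flow is distal, and conclude. You merely spell out two steps the paper leaves implicit --- the normality of $A$ (via transitivity of $\Aut(Y,T)$ and invariance of the Ellis group under automorphisms) and the final passage from ``$X$ distal'' to ``$X\cong Y$ equicontinuous'' using that $\pi$ is then both proximal and distal --- both of which are sound.
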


\begin{proof}
By definition an almost automorphic flow is a metric minimal
flow $(X,T)$ such that the homomorphism $\pi : (X,T) \to (Y,T)$ 
from $(X,T)$ onto its maximal equicontinuous factor $(Y,T)$, is an
almost one-to-one extension.
Such a flow is point distal and it satisfies $\GG(X,x_0) = \GG(Y,y_0) = A \nor G$
(here we use the assumption that $T$ is abelian). 
Thus $N_G(A) = G$ and semi-regularity for $(X,T)$ is the same as regularity.
Now the previous proposition applies.
\end{proof}

It is natural to ask whether a minimal, point distal, flow which is not distal
can be SR. We will see in Section \ref{Morse} below that the Morse minimal
cascade (i.e a $\Z$-flow), which is point distal, metric and not distal, is in fact semi-regular.

\br

We also have an analogous statement concerning distal extensions.

\begin{prop}\label{distal-ext}
A distal extension $\pi : (X,x_0,T) \to (Y,y_0,T)$ of pointed minimal flows
is an SR extension.
\end{prop}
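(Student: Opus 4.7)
The plan is to invoke Proposition \ref{P2}(2): for $h \in N_F(A) = F \cap N_G(A)$, it suffices to verify that both $\phi_h$ and $\phi_{h^{-1}}$ are well defined on $X$. Since $N_F(A)$ is a group, $h^{-1}$ also lies in $N_F(A)$, so the two verifications are symmetric and I treat only $\phi_h$. The main task is therefore the well-definedness of $\phi_h$ on $X$; this is the step where distality of $\pi$ will enter decisively.

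Assume $p, q \in M$ satisfy $px_0 = qx_0$; I must show $phx_0 = qhx_0$. The strategy is to establish separately that (i) $phx_0$ and $qhx_0$ are proximal in $X$, and (ii) $\pi(phx_0) = \pi(qhx_0)$, and then to invoke distality of $\pi$ to force equality. For (i), apply $u$ to both sides of $px_0 = qx_0$ to obtain $(up)x_0 = (uq)x_0$ with $up, uq \in G$. Hence $(uq)^{-1}(up) \in A$, and since $h \in N_G(A)$,
\[
\bigl((uq)h\bigr)^{-1}\bigl((up)h\bigr) \;=\; h^{-1}\bigl((uq)^{-1}(up)\bigr) h \;\in\; h^{-1}A h \;=\; A,
\]
so $(up)hx_0 = (uq)hx_0$, i.e.\ $u(phx_0) = u(qhx_0)$. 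Choosing a net $t_i \in T$ with $t_i \to u$ in $E(X,T)$, the nets $t_i(phx_0)$ and $t_i(qhx_0)$ converge to the common point $u(phx_0) = u(qhx_0)$, so $phx_0$ and $qhx_0$ are proximal.

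For (ii), $h \in F = \GG(Y,y_0)$ means $hy_0 = y_0$, whence
\[
\pi(phx_0) = phy_0 = py_0 = \pi(px_0) = \pi(qx_0) = qy_0 = qhy_0 = \pi(qhx_0).
\]
Thus $phx_0$ and $qhx_0$ lie in a common fiber of $\pi$, and being proximal, distality of $\pi$ forces $phx_0 = qhx_0$. This establishes well-definedness of $\phi_h$; the same argument applied to $h^{-1}$ then yields $\phi_h \in \Aut(X,T)$ via Proposition \ref{P2}(2). The main obstacle is precisely this well-definedness verification, resolved by combining the proximality supplied by the idempotent $u$, the fiber coincidence supplied by $h$ fixing $y_0$, and distality of $\pi$ which upgrades these two ingredients to genuine equality.
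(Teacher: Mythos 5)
Your proof is correct and follows essentially the same route as the paper: show $phx_0$ and $qhx_0$ are proximal (via $h$ normalizing $A$), show they lie in the same $\pi$-fiber (via $h\in F$ fixing $y_0$), and let distality of $\pi$ force equality, then apply the same to $h^{-1}$. The only cosmetic difference is that you obtain proximality by applying the idempotent $u$ to both points, whereas the paper computes $phx_0 = vqhx_0$ directly for the idempotent $v$ with $vp=p$; these are the same idea.
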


\begin{proof}
Let $\GG(X,x_0) = A < \GG(Y,x_0) = F$, and let $h \in N_F(A)$.
We have to show that the map $\phi_h$ is well defined.
Suppose then that $px_0 = qx_0$ for $p, q \in M$.
Then $a = up^{-1}q \in A$, hence
$h^{-1}ah = a' \in A$ and we have
\begin{equation}\label{prox}
phx_0 = pha'x_0 = ph(h^{-1}ah) x_0 =
pahx_0 = pp^{-1}qhx_0 = vqhx_0,
\end{equation}

where $v= v^2$ is the unique  idempotent in $J$ such that $vp =p$.
Thus the points $phx_0$ and $qhx_0$ are proximal.
On the other hand we have
$\pi(phx_0) = ph\pi(x_0) = phy_0 = py_0$ and
also $\pi(qhx_0) = qh\pi(x_0) = qhy_0 = qy_0$.
Since by assumption $px_0 = qx_0$ we also have $py_0 = qy_0$,
so that $phx_0$ and $qhx_0$ are in the same $\pi$ fiber.
Since $\pi$ is a distal extension we conclude that the points $phx_0$ and $qhx_0$ are both 
proximal and distal, whence equal.
Thus $\phi_h$ is a well defined element of $\End(X,T)$ and, as the same argument applies
to $\phi_{h^{-1}}$, we see that $\phi_h \in \Aut(X,T)$.
\end{proof}

A similar argument yields the following proposition.

\begin{prop}
Let  $\pi : (X,x_0,T) \to (Y,y_0,T)$ be a distal extension
with $Y$ being SR.
If, in addition,  we assume that $N_G(A) \subset N_G(F)$,
then $X$ is also SR.
In particular, this is the case when $Y$ is regular.
\end{prop}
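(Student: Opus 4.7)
The plan is to verify semi-regularity of $X$ by showing, for an arbitrary $h \in N_G(A)$, that the map $\phi_h^X \colon X \to X$ defined by $\phi_h^X(px_0) = phx_0$ ($p \in M$) is well defined. Once this is established, the analogous statement for $h^{-1}$ (automatic since $N_G(A)$ is a group) together with Proposition \ref{P2}(2) will exhibit $\phi_h^X$ as an automorphism of $(X,T)$ with $\phi_h^X(x_0) = hx_0$, which is exactly what the definition of SR requires.

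First I would push the problem down to $Y$. Since $h \in N_G(A) \subset N_G(F)$ by hypothesis and $Y$ is SR, Propositions \ref{P1} and \ref{P2} produce a well-defined automorphism $\phi_h^Y \colon Y \to Y$ given by $py_0 \mapsto phy_0$. Consequently, if $p,q \in M$ satisfy $px_0 = qx_0$, then applying $\pi$ and then $\phi_h^Y$ yields $phy_0 = qhy_0$, i.e.\ $\pi(phx_0) = \pi(qhx_0)$; so the two candidate images of $\phi_h^X$ always lie in a common $\pi$-fiber.

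Next I would invoke the computation \eqref{prox} from the proof of Proposition \ref{distal-ext} to show that these two points are moreover proximal: setting $a = up^{-1}q \in A$ and $a' = h^{-1}ah$, the hypothesis $h \in N_G(A)$ forces $a' \in A$, and the same chain of equalities produces $phx_0 = vqhx_0$, where $v \in J$ is the idempotent satisfying $vp = p$; since $vqhx_0$ is proximal to $qhx_0$, so is $phx_0$. Distality of $\pi$ then forces $phx_0 = qhx_0$, completing well-definedness. The ``in particular'' clause is immediate, since $Y$ regular means $F \nor G$ and hence $N_G(F) = G \supset N_G(A)$ trivially. I do not anticipate a real obstacle here: the two ingredients---semi-regularity of $Y$ for the fiber condition, and the Proposition \ref{distal-ext} computation for proximality---are both in place, and the hypothesis $N_G(A) \subset N_G(F)$ exists precisely to make the former applicable to each $h$ that the latter must handle.
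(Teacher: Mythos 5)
Your argument is correct and matches the paper's proof essentially step for step: both use semi-regularity of $Y$ together with $N_G(A)\subset N_G(F)$ to place $phx_0$ and $qhx_0$ in a common $\pi$-fiber, invoke the computation \eqref{prox} from Proposition \ref{distal-ext} for proximality, and conclude equality from distality of $\pi$, with the same one-line treatment of the regular case. Your explicit remark that $vqhx_0$ is proximal to $qhx_0$ is a small point the paper leaves implicit, but it is not a difference in approach.
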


\begin{proof}
Let $h \in N_G(A)$, we show that $\phi_h : X \to X$,
defined by $\phi_h(px_0) = phx_0, \ (p \in M)$ is well defined.
Assuming $px_0 = q x_0$ we have
\begin{gather*}
\pi(phx_0) = phy_0 = \tilde{\phi}_h(py_0)\\
\pi(qhx_0) = qhy_0 = \tilde{\phi}_h(qy_0),
\end{gather*}
where $\tilde{\phi}_h$ is the element of $\Aut(Y)$ defined by $h$
(recalling that $Y$ is SR and that $N_G(A) \subset N_G(F)$).
Since $px_0 = qx_0$ it follows that $py_0 = qy_0$,
whence also $\tilde{\phi}_h(py_0)=\tilde{\phi}_h(qy_0)$.
Thus the points $phx_0$ and $qhx_0$ lie in the same $\pi$-fiber, and therefore are distal.
As in the proof of Proposition \ref{distal-ext} (\ref{prox}) they are also proximal points hence equal.

The last assertion follows since by regularity $F \nor G$ and  $N_G(A) \subseteq N_G(F) = G$.
 \end{proof}

%
%\begin{remark}
%The same proof will show that a distal extension of an SR flow is also SR 
% if we assume in addition that $N_G(A) \subset N_G(F)$. 
% As a corollary we obtain the following.
%\end{remark}
%
%\begin{prop}
%If $\pi : (X,T) \to (Y,T)$ is a distal extension of minimal flows and $(Y,T)$ is
%regular, then $(X,T)$ is semi-regular.
%\end{prop}

\br

%\section{Almost automorphic flows}\label{AA}

\section{The Morse minimal set is semi-regular}\label{Morse}

We have already raised the question whether a minimal point distal, non-distal
flow can be SR. In this section we will show that the classical Morse minimal
set provides such an example. This example will also show that 
there is a  minimal metric point distal SR $\Z$-flow which is not equicontinuous. 
This should be contrasted
with the fact that a regular 
%point distal $\Z$-flow 
%(and even more generally a regular Proximal-Isometric (PI) $\Z$-flow)
Proximal-Isometric (PI) $\Z$-flow
which is not equicontinuous is necessarily non-metrizabe (see \cite{Gl92}).
For a description and detailed analysis of the Morse minimal set we refer to \cite[Chapter 12]{GH}.

\begin{thm}\label{th-Morse}
The Morse minimal set is semi-regular.
\end{thm}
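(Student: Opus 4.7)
My plan is to exploit the well-known structure of the Morse minimal cascade together with the tools developed earlier in the paper. Let $\pi : (X,T) \to (Y,T)$ denote the projection of the Morse flow onto its maximal equicontinuous factor, where $Y$ is the dyadic adding machine. Recall that the bit-flip $\sigma(x) = \ov x$ (complementing every coordinate) is an order-two element of $\Aut(X,T)$ which, together with the shift, generates $\Aut(X,T)$, and that $\sigma$ permutes the two points of every generic $\pi$-fiber.

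I would first choose the base point $x_0 \in uX$ to lie in a generic two-point fiber, and set $A = \GG(X,x_0)$ and $F = \GG(Y,\pi(x_0))$. Since $T = \Z$ is abelian and $Y$ is equicontinuous, $F \nor G$, so $N_G(F) = G$. The 2-to-1 nature of $\pi$ at $x_0$ forces $[F:A] = 2$, so in particular $A \nor F$, and hence $F \subseteq N_G(A)$. By Proposition \ref{P1}, the flip is realized as $\phi_{h_0}$ for a specific $h_0 \in F \setminus A$ with $h_0^2 \in A$ and $h_0 x_0 = \sigma(x_0)$.

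The main task is to verify that for every $h \in N_G(A)$ the map $\phi_h(px_0) = phx_0$ is well defined; Proposition \ref{P2} will then upgrade this to $\phi_h \in \Aut(X,T)$. For $h \in F$ the verification is direct: the relation $px_0 = qx_0$ gives $u(p^{-1}q) \in A$, and since $A \nor F$ and $h \in F$ we have $h^{-1} u(p^{-1}q) h \in A$, whence $phx_0 = qhx_0$. For $h \in N_G(A) \setminus F$, the idea is to write $h$ modulo $A$ as a product $tu \cdot h_0^k$ with $t \in T$ and $k \in \{0,1\}$, so that the corresponding $\phi_h$ is a composition of a shift with a (possibly trivial) flip, visibly in $\Aut(X,T)$.

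The main obstacle is proving that no cosets of $A$ in $N_G(A)$ appear beyond those of the form $tu \cdot h_0^k \cdot A$. Equivalently, I need to rule out hypothetical ``phantom'' elements $h \in N_G(A)$ whose action on $x_0$ would produce a sequence lying outside the $\Aut(X,T)$-orbit of $x_0$. I would address this via the combinatorial rigidity of the Thue--Morse substitution $0 \mapsto 01,\; 1 \mapsto 10$: the block hierarchy of any candidate $\phi_h(x_0)$ must be consistent level by level with that of $x_0$, and this forces $\phi_h(x_0)$ to agree either with a shift of $x_0$ or with a shift of $\sigma(x_0)$. Once this rigidity is established, well-definedness of $\phi_h$ follows from the same calculation as in the $F$-case, and the symmetric argument for $h^{-1}$ gives $\phi_h \in \Aut(X,T)$, completing the proof.
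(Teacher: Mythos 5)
Your reduction is sensible as far as it goes: the identification $[F:A]=2$, the realization of the flip as $\phi_{h_0}$ for some $h_0\in F\setminus A$, and the observation that the theorem amounts to showing that $N_G(A)$ contains no cosets of $A$ beyond those of the form $tu\,h_0^k A$ are all correct. Even so, your treatment of $h\in F$ is not quite complete as written: from $px_0=qx_0$ and $h^{-1}\bigl(u(p^{-1}q)\bigr)h\in A$ you only get that $phx_0$ and $qhx_0$ are \emph{proximal} (they are images of one point of $uX$ under two idempotents); to conclude equality you still need that they lie in the same fiber of the distal ($\Z_2$-group) extension $X\to Y$, i.e.\ the argument of Proposition \ref{distal-ext}. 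That part is fixable.

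The genuine gap is the step you yourself label the ``main obstacle,'' which is in fact the entire content of the theorem, and your proposed mechanism for it does not work as stated. The hypothesis on $h$ is purely algebraic: $h\in N_G(A)$, equivalently $\GG(X,hx_0)=\GG(X,x_0)$. You assert that ``the block hierarchy of $\phi_h(x_0)$ must be consistent level by level with that of $x_0$,'' but every point of the uncountable dense set $uX=Gx_0$ carries a Thue--Morse block hierarchy --- that is just membership in the subshift --- so consistency with the substitution cannot by itself confine $hx_0$ to the countable set $\{S^nx_0,\,S^n\kappa(x_0):n\in\Z\}$. No argument is given that extracts any combinatorial constraint on the sequence $hx_0$ from the condition $h^{-1}Ah=A$, and that translation is exactly where the work lies. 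The paper does it dynamically rather than combinatorially: for $x=gx_0$ it forms the minimal set $W=\overline{\mathcal{O}(x_0,x)}\subseteq X\times X$, shows that $(x_0,\kappa(x))\in W$ forces $g\notin N_G(A)$, and in the remaining case proves that $W$ is the graph of an automorphism by ruling out extra points in the four-point fibers over split points of the adding machine, using the fact that $E(X,S)$ has exactly two minimal ideals with two idempotents each (equivalently, the asymptotic-pair structure of the four special points). Some argument of this kind --- connecting the normalizer condition to the dynamics of the pair $(x_0,hx_0)$ --- is indispensable, and your proposal neither supplies it nor offers a workable substitute.
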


\begin{proof}
Let $(X,S)$ denote the Morse minimal flow. Here we deviate from our 
usual notation and use the letter $S$ to denote the shift homeomorphism
that generates the $\Z$-flow on $X$,  a subshift $X \subset \{0,1\}^\Z$.
We know that $(X,S)$ has the following structure:
$$
X \overset\sig\to Y \overset\theta\to Z,
$$
where 
(i) $\pi = \theta \circ \sig$ is the homomorphism of $(X,T)$ onto its
maximal equicontinuous factor (a dyadic adding machine), (ii)
$\theta$ is an almost one-to-one extension, and $\sig$ is a $\Z_2$ group extension.
More precisely, there is a point $z_1 \in Z$ such that $\theta^{-1}(z_1) = \{y_1, \bar{y}_1\}$
and for every point $z \in Z$ which is not in the orbit of $z_1$, we have that 
$\theta^{-1}(z)$ is a singleton. Finally on $X$ there is an involution $\kappa $
 (a self homeomorphism satisfying $\kappa^2 = \id$) which commutes with the shift.
 (It sends the sequence $x \in X \subset \{0,1\}^\Z$ into the ``fliped" sequence 
 $\kappa(x) = x'$, where $x'(n) = {x(n)'}$, and $0'=1,\ 1'=0$.)
 The map $\sig : X \to Y$ is then the quotient map under the action of $\Z_2 = \{\id, \kappa\}$.
 We let $\pi^{-1}(z_1) = \sig^{-1}\{y_1, \bar{y}_1\} =
 \{x_1, x'_1, \bar{x}_1, \bar{x}_1'\}$.
% = \{\mu, \mu'. \nu,\nu'\}$, using \cite{GH} notations,
% where 
% \begin{gather*}
% \mu = \breve{Q}Q, \qquad
% \mu' = \breve{Q}'Q', \\
% \nu= \breve{Q}Q, \qquad
% \nu' = \breve{Q}'Q'. 
% \end{gather*}
 
 Next fix a point $x_0 \in uX$ which is not on the orbits of
 the four points $ \{x_1, x'_1, \bar{x}_1, \bar{x}_1'\}$.
 We let $y_0 = \sig(x_0)$ and $z_0 = \theta(y_0)$. 
 Also set  $ \GG(X,x_0)  = A$ and $\GG(Y,y_0) = \GG(Z,z_0) = F$.
 We then have $F \nor G$ and $Z \cong G/F$, and
 $A \nor F$ and $\Z_2 \cong F/A$.
 It is also shown in \cite{GH} that the group
 $\Aut(X,S)$ is the group 
 $\{S^n : n \in \Z\} \oplus \{\id, \kappa\} \cong\Z \oplus \Z_2$.

 Since $\theta^{-1}(z_0) =\{y_0\}$ it follows that 
 $\pi^{-1}(z_0) = \sig^{-1}(y_0) = \{x_0, x'_0\}$ and both $x_0$ and
 $x'_0$ are distal points. 
 % In particular, $ux_0 = x_0 \in uX$.
 
 %We let $y_0 = \sig(x_0)$ and $z_0 = \theta(y_0)$. 
% Also set
% $ \GG(X,x_0) =  \GG(Y,y_0) = A$ and $\GG(Z,z_0) = F$.
  
 Let now $x$ be an arbitrary point in $uX$ and set
 $W =  \ol{\Ocal(x_0,x)}$, the orbit closure of the point $(x_0,x) \in X \times X$.
 Because $x \in uX$ the point $(x_0,x)$ is an almost periodic point of 
 the product flow $X \times X$, so that $W$ is a minimal flow.
 Also, there exists $g \in G$ with $x = gx_0$.
 Let $y = \sig(x)$ and $z = \pi(x) = \theta(y)$.
 
 We claim that for every $(a,b) \in W$ we have
 $$
 W[a] : = \{c \in X : (a,c) \in W\} \subseteq \{a\} \times \pi^{-1}(\pi(b)).
 $$
 
 To see this note that if $(x_0,c) \in W$ then $(x_0,c) = p(x_0,x)$ for some
 $p \in M$ and  
 $$ 
 (\pi \times \pi)(x_0,c) =(\pi \times \pi)(p(x_0,x))= (pz_0, p\pi(x)) = 
 (z_0, pz).
$$
 Since $Z$ is equicontinuous, $pz_0 = z_0$ implies that 
 $pz =z$, so that $\pi(c) =z$. Thus
 $$
 W[x_0]  = \{c \in X : (x_0,c) \in W\} \subseteq \{x_0\} \times \pi^{-1}(z).
 $$
 and therefore
 $W[px_0] \subseteq \pi^{-1}(\pi(px))$ for every $p \in M$, as claimed.
 
 \br

We now consider two possible cases:

\br

{\bf Case 1:}
Suppose that $(x_0,x') = (x_0, \ka(x)) \in W$.
(Note that this implies that $(\id \times \ka)(W) \cap W \not=\emptyset$, whence
$(\id \times \ka)(W) = W$.)
Since $(x_0, x') \in uW = G(x_0,x)$, there exists $h \in G$ such that
$h(x_0,x) = (x_0,x')$. In particular $hx_0 = x_0$, whence $h \in A$.
On the other hand 
$x' = \ka(x) = \ka(gx_0) = gx'_0 =  hx = hgx_0$, 
hence $x'_0 = g^{-1}hg x_0$, hence $g^{-1}hg \not \in A$;
i.e. $g \not\in N_G(A)$.

\br

{\bf Case 2:}
Now assume that $(x_0,x') \not \in W$.
As in Case 1, we deduce that
\begin{equation}\label{kappa}
{\text {if $(a,b) \in W$ then $(a, \ka(b)) \not\in W$}}.
\end{equation}

Let $P$ denote the projection map from $W$ onto $X$ (as its first coordinate).
We have $P^{-1}(a) = W[a] \subseteq \{a\} \times \sig^{-1}(\theta^{-1}(z))$.

%\br
%
%{\bf Claim:} $P_1^{-1}(x_0) = \{(x_0,x)\}$.
%
%\br
%
%To see this we again consider two cases:
%
%\br
%
%{\bf Case $2_a$:} The point $y =\sig(x)$ is a non-split point; i.e.
%$\theta^{-1}(z) =\{y\}$.
%
%In this case we have
%$P_1^{-1}(x_0) \subseteq \{x_0\} \times \sig^{-1}(y)
%= \{x_0\} \times \{x, x'\}$, and, since by assumption
%$(x_0, x') \not\in W$, we indeed have
%$P_1^{-1}(x_0) = \{(x_0,x)\}$, as claimed.
%
%\br
%
%
%{\bf Case $2_b$:} The point $y$ is a split point; i.e.
%$\theta^{-1}(z) =\{y, \bar{y}\}$, with $y \not = \bar{y}$.
%
%In this case we have $\pi^{-1}(z) = \{x, x', \bar{x}, \bar{x}'\}$.
%Suppose $(x_0, \bar{x}) \in W$ (the case  $(x_0, \bar{x}') \in W$ is symmetric).
%Now the enveloping semigroup $E(X,S)$ of the Morse flow has exactly
%two minimal ideals, say $I_1, I_2$, (each containing exactly two idempotents)
%(see \cite{HJ} and \cite{S}). Assuming (the image of) $u$ is in $I_1$, there is in $I_2$ 
%a unique idempotent $\tilde{u}$ equivalent to $u$;
%i.e. $u\tilde{u} = \tilde{u}$ and $\tilde{u}u= u$ (see e.g. 
%\cite[I, Proposition 2.5]{Gl76}).
%Both $u$ and $\tilde{u}$ act as the identity on $uX$, however, as they are
%distinct elements of $E(X,S)$ we must have
%\begin{gather*}
%u(x,x',\bar{x}, \bar{x}') = (x,x',x,x')\\
%\tilde{u}(x,x',\bar{x}, \bar{x}') = (x,x',x',x)
%\end{gather*}
%(or vice versa).
%Thus we have $\tilde{u}(x_0, \bar{x}) = (x_0,x') \in W$
%(or $u(x_0, \bar{x}) = (x_0,x') \in W$) contradicting our assumption that 
%$(x_0,x') \not \in W$.
%Thus we have shown that indeed $P_1^{-1}(x_0) = \{(x_0,x)\}$.

\br

{\bf Claim:} For every $a \in X$ we have $P^{-1}(a) = \{(a,\phi(a))\}$
for a surjection $\phi : X \to X$.

\br

We recall here that 
the enveloping semigroup $E(X,S)$ of the Morse flow has exactly
two minimal ideals, say $I_1, I_2$, each containing exactly two idempotents,
say $J_1 = \{u, v\} \subset I_1$ and $J_2 = \{\tilde{u}, \tilde{v}\} \subset I_2$
(see \cite{HJ} and \cite{S}). 
Now for the point $a$ we have at least one of the possibilities $ua=a$ or $va=a$.
We will assume that $ua =a$ and the other case is treated in the same way
with $v$ replacing our usual $u$.

Fix some $b \in X$ with $(a,b) \in W$ and $ub=b$.
We write $\eta = \sig(b)$ and $\zeta= \pi(b) = \theta(\eta)$.

To prove our claim we again consider two cases:

\br

{\bf Case $2_a$:} The point $\eta$ is a non-split point; i.e.
$\theta^{-1}(\zeta) =\{\eta\}$.

In this case we have
$P^{-1}(a) \subseteq \{a\} \times \sig^{-1}(\eta)
= \{a\} \times \{b, b'\}$, and, since by assumption
$(a, b') \not\in W$, we indeed have
$P^{-1}(a) = \{(a,b)\}$, as claimed (putting $b = \phi(a)$).

\br

{\bf Case $2_b$:} The point $\eta$ is a split point; i.e.
$\theta^{-1}(\zeta) =\{\eta, \bar{\eta}\}$, with $\eta \not = \bar{\eta}$.

In this case we have $\pi^{-1}(\zeta) = \{b, b', \bar{b}, \bar{b}'\}$.
Suppose $(a, \bar{b}) \in W$ (the case  $(a, \bar{b}') \in W$ is symmetric).
By the general theory of enveloping semigroups there is in $I_2$  
a unique idempotent $\tilde{u}$ equivalent to $u$;
i.e. $u\tilde{u} = \tilde{u}$ and $\tilde{u}u= u$ (see e.g. 
\cite[I, Proposition 2.5]{Gl76}).
Both $u$ and $\tilde{u}$ act as the identity on $uX$, however, as they are
distinct elements of $E(X,S)$,
we must have
\begin{gather}\label{collaps}
\begin{split}
u(b,b',\bar{b}, \bar{b}') = (b,b',b,b')\\
\tilde{u}(b,b',\bar{b}, \bar{b}') = (b,b',b',b)
\end{split}
\end{gather}
(or vice versa).
(In fact, if $u\bar{b}= \tilde{u}\bar{b}$ then also $uS^n\bar{b}= \tilde{u}S^n\bar{b}$ and
$uS^n\bar{b}'= \tilde{u}S^n\bar{b}'$
for every $n \in \Z$, whence $u = \tilde{u}$, which is impossible.)
Thus we have $\tilde{u}(a, \bar{b}) = (a,b') \in W$
(or $u(a, \bar{b}) = (a,b') \in W$) contradicting our assumption that 
$(a,b') \not \in W$. 
We have shown, in view of (\ref{kappa}), that indeed $P^{-1}(a) = \{(a,b)\}$,
and we let $\phi(a) =b$.

Given $c \in X$, there is a point $a \in X$ such that $(a,c) \in W$
and as also $(a, \phi(a)) \in W$, we have $\phi(a) = c$.
This shows that $\phi$ is surjective and our claim is proven.

\br

We now have $W = \{(a, \phi(a)) : a \in X\}$ and it follows that $\phi \in \End(X,S)$.
Since $g^{-1}(x_0,x) = g^{-1}(x_0,gx_0 ) = (g^{-1}x_0, x_0) \in W$
we conclude, by symmetry, that $\phi \in \Aut(X)$.

\br

To sum up, we have shown that for every $h \in G$ either $h \not \in N_G(A)$ or
$\phi_h \in \Aut(X,S)$; in other words, we have shown that $(X,S)$ is semi-regular.
Thus the group of virtual automorphisms $N_G(A)/A$ is realized as 
$\Aut(X,S) = \{S^n : n \in \Z\} \oplus \{\id, \ka\} \cong \Z \oplus \Z_2$.
%Since $\Aut(X,S) = \{S^n : n \in \Z\} \oplus \{\id, \ka\} \cong \Z \oplus \Z_2$ we conclude
%that the group of virtual automorphism $N_G(A)/A$ is realized as $\Aut(X,S)$. 
\end{proof}

\begin{remark}
Using \cite{GH}'s notation we can write
$ \{x_1, x'_1, \bar{x}_1, \bar{x}_1'\}
 = \{\mu, \mu', \nu,\nu'\}$, where 
 \begin{gather*}
 \mu = \breve{Q}Q, \qquad
 \mu' = \breve{Q}'Q', \\
 \nu= \breve{Q'}Q, \qquad
 \nu' = \breve{Q}'Q'. 
 \end{gather*}
From this description it follows immediately that 
the pairs $\{\nu, \mu\} = \{\bar{x}_1, x_1\}$ and $\{\nu, \mu'\} = \{\bar{x}_1, x'_1\}$ are 
positively asymptotic and negatively asymptotic pairs, respectively, hence proximal. This directly implies (\ref{collaps}).
In fact, this argument can be used to prove that indeed $E(X,T)$ has exactly two minimal ideals, 
each having exactly two idempotents.
\end{remark}

\begin{remark}
We note that the proof of  Theorem \ref{th-Morse} shows also that the Morse flow is coalescent.
\end{remark}

\br

\section{Every minimal flow admits a proximal extension which is SR}\label{extensions}

\begin{thm}
Let $(X,x_0,T)$ be a minimal flow with $\GG(X,x_0)= A$, and let $\Ga \leq N_G(A)$ be a subgroup.
Then there exists a minimal pointed flow
$Z_\Ga = (Z, z_0,T)$ and a homomorphism $\pi : Z \to X$ such that:
\begin{enumerate}
\item
$\GG(Z,z_0) =A$ (so that $\pi$ is a proximal extension).
\item
 $\Ga A/A  \leq  \Aut(Z)$.
 \item
 For every minimal flow $(Y,y_0,T)$ which is a proximal extension $\eta :(Y, y_0,T) \to (X,x_0,T)$
such that  $\Ga A/A < \Aut(Y)$, there is a commutative diagram:
 
 \begin{equation*}
\xymatrix
{
(Y,y_0) \ar[d]_\eta \ar[r]^\la & (Z,z_0) \ar[dl]^\pi\\
(X,x_0) &
}
\end{equation*} 
\end{enumerate}
\end{thm}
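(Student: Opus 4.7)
The plan is to realize $Z$ as an orbit closure inside the product flow $X^\Ga$ under the diagonal $T$-action. Concretely, I set
$$
z_0 = (hx_0)_{h \in \Ga} \in X^\Ga
$$
and define $Z := \overline{Tz_0}$, with $\pi : Z \to X$ the projection onto the coordinate indexed by the identity $u$ of $\Ga$. The point $z_0$ is almost periodic in $X^\Ga$, since each $hx_0 \in uX$, so $uz_0 = z_0$; this makes $(Z,z_0)$ a pointed minimal flow and $\pi(z_0) = ux_0 = x_0$.

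Next I would compute $\GG(Z,z_0)$. For $g \in G$ one has $gz_0 = (ghx_0)_{h \in \Ga}$, so $gz_0 = z_0$ is equivalent to $g \in \GG(X,hx_0) = hAh^{-1}$ for every $h \in \Ga$. Since $\Ga \le N_G(A)$, every such conjugate equals $A$, hence $\GG(Z,z_0) = A$. By the Ellis-group criterion recalled in Section \ref{sec-basic}, $\pi$ is therefore a proximal extension. To produce the automorphisms, for each $h \in \Ga$ I define $\phi_h : Z \to Z$ by $\phi_h(pz_0) = phz_0$ for $p\in M$. The key point is well-definedness: $pz_0 = qz_0$ amounts to $pkx_0 = qkx_0$ for every $k\in\Ga$, and the desired conclusion $phz_0 = qhz_0$ amounts to the same thing with $k$ replaced by $hk$, i.e.\ for every $k \in h\Ga$. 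Because $\Ga$ is a subgroup, $h\Ga = \Ga$, so the two conditions coincide. Continuity of $\phi_h$ follows exactly as in Proposition \ref{P2} via the graph argument applied to $\mu : M \to Z$, $p \mapsto pz_0$, and running the construction also with $h^{-1}$ shows $\phi_h \in \Aut(Z)$. The check $\phi_h = \id \Leftrightarrow hz_0 = z_0 \Leftrightarrow h \in A$ together with $\phi_h \circ \phi_{h'} = \phi_{h'h}$ give an injective homomorphism $\Ga A/A \hookrightarrow \Aut(Z)$ via $hA \mapsto \phi_{h^{-1}}$.

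For the universal property, suppose $\eta : (Y,y_0) \to (X,x_0)$ is a proximal extension with $\Ga A/A \le \Aut(Y)$, and write $\tilde\phi_h \in \Aut(Y)$ for the automorphism implemented by $h \in \Ga$, so that $\tilde\phi_h(y_0) = hy_0$. I define $\la : Y \to X^\Ga$ by
$$
\la(y) = (\eta(\tilde\phi_h(y)))_{h \in \Ga}.
$$
This map is continuous and $T$-equivariant, and $\la(y_0) = (\eta(hy_0))_h = (hx_0)_h = z_0$, so by minimality of $Y$ its image is $\overline{Tz_0} = Z$. Also $\pi \circ \la(y) = \eta(\tilde\phi_u(y)) = \eta(y)$, which closes the commutative triangle.

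The main obstacle is the well-definedness of $\phi_h$ on $Z$: it is precisely this step that forces the construction to use a product indexed by all of $\Ga$ rather than a single copy of $X$, and it is where both hypotheses on $\Ga$ — being a subgroup and lying in $N_G(A)$ — enter the argument. Once $\phi_h \in \Aut(Z)$ is established, the remaining verifications are formal.
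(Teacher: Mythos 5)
Your proposal is correct and follows essentially the same route as the paper: the flow $Z$ is realized as the orbit closure of $(hx_0)_{h\in\Ga}$ in a product of copies of $X$ (the paper writes it as the join $\bigvee_{h\in\Ga}(X,hx_0)$), the well-definedness of $\phi_h$ rests on the same observation that $h\Ga=\Ga$, and the universal map $\la$ is the same coordinatewise map $y\mapsto(\eta(\tilde\phi_h(y)))_{h\in\Ga}$. Your additional checks (injectivity of $hA\mapsto\phi_{h^{-1}}$ and the anti-homomorphism issue) are correct details that the paper leaves implicit.
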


\begin{proof}
Let $C = \{hx_0 : h \in \Ga\} \subset X$.
Let $z_0 \in X^C$ be the point
$$
z_0(hx_0) = hx_0, \ (h \in \Ga).
$$
Let $Z = \ol{\Ocal_T(z_0)} \subset X^C$
(alternatively $Z = \bigvee_{h \in \Ga} (X, hx_0)$).
Because $C \subseteq uX$, it follows that the point
$z_0$ is an almost periodic point of the product flow
$X^C$, hence $Z$ is a minimal flow.
The projection on the $x_0 \in C$ coordinate is
a homomorphism $\pi : (Z,z_0,T) \to (X, x_0,T)$.

Clearly $az_0 = z_0$ for every $a \in A$, so that $A \subseteq \GG(Z,z_0)$. 
Conversely, if $bz_0 = z_0$ for some $b \in G$ then
$$
(bz_0)(x_0) = bz_0(x_0) = bx_0 = z_0(x_0) = x_0,
$$
hence $b \in A$, $\GG(Z,z_0) \subseteq A$, hence $\GG(Z,z_0) = A$.

For $h \in \Ga$ the map
$$
\phi_h : Z \to Z, \quad \phi_h(pz_0) = phz_0, \ (p \in M),
$$
is well defined.
In fact if $pz_0 = qz_0$ for $p, q \in M$, then for all $h' \in \Ga$
\begin{gather*}
(phz_0)(h'x_0) = ph(z_0(h'x_0))  = phh'x_0 = pz_0(hh'x_0)\\
(qhz_0)(h'x_0) = qh(z_0(h'x_0))  = qhh'x_0 =  qz_0(hh'x_0).
\end{gather*}
As $\Ga$ is a group, $hh' \in \Ga$ and $hh'x_0 \in C$. 
Therefore, by our assumption,  $phz_0(hh'x_0)  = qhz_0(hh'x_0)$, whence
 $phz_0  = qhz_0$.
 %$$
%(phz_0)(h') =ph(h'x_0) =  qh(h'x_0) =  (qhz_0)(h').
%$$
We conclude, as in Section \ref{sec-virtual}, that $\phi_h \in \Aut(Z)$. 
We thus get $\Ga A /A  \leq  \Aut(Z)$.

Next assume that $(Y,y_0,T)$ is as in (3).
By assumption $\Ga A /A  \leq  \Aut(Y)$ and it follows that for each $h \in \Ga$
the function $\phi_h(py_0) = phy_0$ is a well defined element of $\Aut(Y,T)$.
Thus the map $\eta \circ \phi_h : Y \to X$ satisfies
$$
(\eta \circ \phi_h)(y_0) = \eta(hy_0) = h x_0,
$$
and $\eta \circ \phi_h : (Y, y_0, T) \to (X, hx_0, T)$ is a homomorphism of pointed flows.
It then follows that there is a homomorphism
$$
\la : (Y, y_0, T) \to \bigvee_{h \in \Ga}(X, hx_0,T) \cong Z_\Ga
$$
which satisfies $\eta = \pi \circ \la$.
\end{proof}

\begin{remark}
Note that when $(X,T)$ is metrizable and the group $\Ga A/A$ is countable
then the SR flow $Z_\Ga$ is also metrizable. In particular this is the case for
$\Ga = \langle \gamma_0 \rangle = \{\ga_0^n : n \in \Z\}$ for some $\ga_0 \in N_G(A)$.
\end{remark}

\begin{cor}
Every minimal flow admits a proximal extension which is SR
\end{cor}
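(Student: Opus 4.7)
The plan is to invoke the preceding theorem directly, choosing the maximal admissible subgroup $\Ga = N_G(A)$, and then to verify that the resulting extension $Z_\Ga$ satisfies the definition of SR.

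Fix a pointed minimal flow $(X,x_0,T)$ and set $A = \GG(X,x_0)$. Apply the preceding theorem with the subgroup $\Ga = N_G(A)$, which is clearly a subgroup of $N_G(A)$ itself. This produces a pointed minimal flow $(Z,z_0,T)$ together with a homomorphism $\pi : Z \to X$ satisfying conclusions (1) and (2) of the theorem. By (1), $\GG(Z,z_0) = A$, so $\pi$ is a proximal extension, which is the type of extension demanded by the corollary.

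It remains to check that $(Z,z_0,T)$ is SR. Because $\GG(Z,z_0) = A$, the group controlling the SR property for $(Z,z_0,T)$ is the \emph{same} subgroup $N_G(A)$ that we used in the construction. Conclusion (2) of the theorem gives $N_G(A)\cdot A/A = N_G(A)/A \leq \Aut(Z)$; more concretely, inspecting the construction shows that for every $h \in N_G(A)$ the map $\phi_h : pz_0 \mapsto phz_0$ is a well-defined automorphism of $(Z,T)$ with $\phi_h(z_0)=hz_0$. This is exactly the condition that each virtual automorphism of $Z$ is realized by an actual automorphism, so $(Z,z_0,T)$ is SR.

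The only thing one might worry about is whether the choice of base point $z_0 \in Z$ used to define ``$A$ at the level of $Z$" coincides with the one supplied by the theorem. But the remark after the definition of SR explicitly addresses this: different admissible choices of base point only conjugate the Ellis group, so SR is independent of these choices. No obstacle remains, and the corollary follows.
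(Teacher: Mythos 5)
Your proposal is correct and is exactly the paper's argument: the paper's entire proof is ``Take $\Ga = N_G(A)$,'' and your additional verification (that $\GG(Z,z_0)=A$ makes $N_G(A)$ the relevant normalizer for $Z$, and that conclusion (2) realizes each $\phi_h$ as an automorphism) is just the unwinding the authors leave implicit. No issues.
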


\begin{proof}
Take $\Ga = N_G(A)$.
\end{proof}

\begin{defn}
We write $X_{SR}$ for the flow $Z_{N_G(A)}$. With this notation
it is easy to check that $(X,T)$ is SR iff $X = X_{SR}$.
\end{defn}

\br

\begin{exa}
Let $(Z, R_\al)$ denote the rotation by $\al \in \R \setminus \Q$ on the circle
$Z = \R/\Z$, \ $R_\al(x) = z + \al \pmod 1$.
Let $(X, S)$ be the Sturmian flow with $\pi : X \to Z$ being its maximal equicontinuous factor.
Then the flow $Z$ is regular and the extension $\pi$ is almost one-to-one (so that $X$ is almost automorphic).
The regularizer of $(X,S)$, which is the same as $X_{SR}$, is the Ellis' two circles flow $\tilde{X}$, 
a nonmetrizable flow (see e.g. \cite[Example 14.10]{GMe} for more details on this).
We have $\GG(Z,z_0) = \GG(X,x_0)=\GG(\tilde{X}, \tilde{x}_0)=A$, and 
we observe that the virtual automorphism group
$N_G(A)/A$ is realized on $Z$ as the compact group $\Aut(Z) \cong \T = \R/\Z$, on $\tilde{X}$
again as $\Aut(\tilde{X}) \cong \R/\Z$, but with the discrete topology, and it is mostly nonrealizable on $X$,
where $\Aut(X) = \{S^n : n \in \Z\}$.
\end{exa}

\br

Given any minimal flow $(X,T)$, we will next describe
another natural construction that yields an SR flow $X^{SR}$ which is a proximal extension of $X$.
In the collection of all the minimal flows which are SR and are proximal extensions of $X$,
the flow $X_{SR}$ is the minimum and the flow $X^{SR}$ is the maximum (with respect to being a factor).

Let $(X,x_0,T)$ be a minimal flow with $\GG(X,x_0)= A$. Let
$X^{SR} = \Pi(A)$ denote the minimal flow which is the maximal proximal extension of $X$.
As described in \cite{Gl76} this flow can be presented as a quasifactor of $M$, as follows:
$$
\Pi(A) = \{ p \circ A : p \in M\}.
$$
Moreover the map $\pi : M \to \Pi(A)$ is such that
$\pi^{-1}(p \circ (u\circ A)) = p \circ A, \ \forall p \in M$.
In particular the elements of $\Pi(A)$ form a partition of $M$.
(For more details on the quasifactor $\Pi(A)$ and the the circle operation see \cite[Chapter IX]{Gl76}.)

\begin{lem}\label{circ}
For $h \in N_G(A)$ and $p = \lim t_i \in M$,
$$
ph \circ A = p\circ hA = (p \circ A)h.
$$
\end{lem}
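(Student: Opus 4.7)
The plan is to reduce both equalities to two ingredients: the hypothesis $h\in N_G(A)$, which gives $hA=Ah$ as subsets of $G$ (equivalently $h^{-1}Ah=A$, and since $h\in G$ we may invert), together with the continuity of right multiplication $R_h$ on $M$. The circle operation, as recalled in \cite[Chapter IX]{Gl76}, can for our purposes be computed as a Hausdorff limit: if $p=\lim_i t_i$ with $t_i\in T$ and $B\subseteq G$ is $\tau$-closed, then $p\circ B=\lim_i t_iB$, and this limit depends only on the limit $p\in M$.

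For the equality $p\circ hA=(p\circ A)h$, first rewrite $p\circ hA=p\circ Ah$ using the normalizer condition $hA=Ah$. Then, because $R_h:M\to M$ is continuous, it commutes with Hausdorff limits of subsets of $M$, so
\[
p\circ Ah=\lim_i t_i(Ah)=\lim_i(t_iA)h=\Bigl(\lim_i t_iA\Bigr)h=(p\circ A)h.
\]
For the equality $ph\circ A=p\circ hA$, use the trivial set-theoretic identity $(t_ih)A=t_i(hA)$ at each stage, and take Hausdorff limits to get $\lim_i(t_ih)A=\lim_i t_i(hA)=p\circ hA$. On the other hand, continuity of $R_h$ gives $ph=\lim_i t_ih$ in $M$, and the Hausdorff limit $\lim_i(t_ih)A$ agrees with $ph\circ A$ by the net-independence of the circle construction.

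The main obstacle is this last point, namely justifying that the circle operation $p\mapsto p\circ A$ is insensitive to whether the approximating net is chosen inside $T$ or inside $M$, so that we are entitled to approximate $ph$ by the $R_h$-images $t_ih\in M$ rather than by a net drawn from $T$ itself. Once this standard feature of the circle operation from \cite[Chapter IX]{Gl76} is in hand, both equalities reduce to the displayed manipulations with Hausdorff limits, controlled entirely by continuity of $R_h$ and by the normalizer identity $hA=Ah$.
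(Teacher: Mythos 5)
Your computation of $p\circ hA=p\circ Ah=(p\circ A)h$ is fine and coincides with the paper's: it uses only $hA=Ah$ and the fact that the continuous map $R_h$ commutes with Vietoris limits. The problem is the first equality, $ph\circ A=p\circ hA$, where you invoke ``net-independence of the circle construction'' to identify $\lim_i (t_ih)A$ with $ph\circ A$. The standard well-definedness of the circle operation says only that $\lim_j s_jC$ is the same for every net $s_j\in T$ converging to a given element of $\beta T$. Your net $t_ih$ lives in $M$, not in $T$, and for elements $q\in M\setminus T$ the literal product $qC$ is in general a \emph{proper} subset of $q\circ C$ (the constant net $q_i=q$ already shows that computing circle products with nets in $M$ and literal products gives the wrong, too small, answer). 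What the legitimate continuity arguments actually yield is $(ph)\circ A=\lim_i (t_ih)\circ A=\lim_i t_i(h\circ A)=p\circ(h\circ A)\supseteq p\circ (hA)$, i.e.\ only an inclusion, since $h\circ A$ may strictly contain $hA$. The assertion that this inclusion is an equality is essentially the content of the lemma, so citing it as a ``standard feature'' is circular.

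The paper closes exactly this gap by a different idea: it records the chain $ph\circ A=p\circ h\circ A\supseteq p\circ hA=p\circ Ah=(p\circ A)h$ and then uses the fact, special to $\tau$-closed subgroups, that the elements of $\Pi(A)=\{q\circ A:q\in M\}$ form a partition of $M$; since $(p\circ A)h$ is itself a cell of this partition sitting inside the cell $ph\circ A$, the two must coincide. Some such input — the partition (fiber) structure of the quasifactor $\Pi(A)$, which is where the hypotheses that $A$ is a $\tau$-closed subgroup and $h\in N_G(A)$ really enter — is needed to upgrade the inclusion to an equality, and it is missing from your argument.
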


\begin{proof}
We have
\begin{gather*}
ph \circ A = p\circ h \circ A \supseteq p \circ hA \\
=\{\lim t_i h a_i : a_i \in A\} = \{\lim t_i  a'_i h : a'_i \in A \} \\
=p \circ Ah = (p \circ A)h  \in \Pi(A).
\end{gather*}
However, as the elements of $\Pi(A)$ form a partition of $M$,
we get $ph \circ A = p\circ hA = (p \circ A)h$.
%However, we also have
%$pAh \subseteq (p \circ A)h \cap ph \circ A$, whence
%$(p \circ A)h = ph \circ A$.
\end{proof}

\begin{prop}\label{Pi}
For every pointed minimal flow $(X,x_0,T)$ with $\GG(X,x_0)= A$,
the universal minimal proximal extension $\Pi(A)$ of $X$ (which depends only on $A$) is semi-regular.
\end{prop}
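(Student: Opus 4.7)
The plan is to show directly that for every $h\in N_G(A)$, the map $\phi_h$ defined at the level of the quasifactor by $\phi_h(p\circ A)=ph\circ A$ is a well-defined automorphism of $\Pi(A)$; once this is done, semi-regularity of $\Pi(A)$ follows from the definition.

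First, I would identify the correct base point and Ellis group. Since $\pi:\Pi(A)\to X$ is a proximal extension (this is the universal property of $\Pi(A)$), the fact recalled in Section \ref{sec-basic} gives $\GG(\Pi(A),u\circ A)=\GG(X,x_0)=A$. So checking the $\Ga$-semi-regularity condition of the Definition amounts to showing that for each $h\in N_G(A)$ there is an element of $\Aut(\Pi(A),T)$ sending $u\circ A$ to $h\circ A$.

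Next, I would show $\phi_h$ is well-defined using Lemma \ref{circ}. If $p,q\in M$ satisfy $p\circ A=q\circ A$, then by the lemma
\[
ph\circ A=(p\circ A)h=(q\circ A)h=qh\circ A,
\]
so the value $\phi_h(p\circ A)=ph\circ A$ depends only on $p\circ A$. In particular $\phi_h$ is defined on all of $\Pi(A)$, and it clearly commutes with the $T$-action (since multiplication by elements of $T$ happens on the left and $h$ acts on the right). Continuity of $\phi_h$ follows from the same graph-closure argument as in the proof of Proposition \ref{P2}: the graph of $R_h$ on $M$ is closed, the quotient map $M\to\Pi(A)$, $p\mapsto p\circ A$, is continuous, and $\phi_h$ is the map induced on the quotient.

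Finally, since $h\in N_G(A)$ we also have $h^{-1}\in N_G(A)$, so the identical argument produces a continuous $T$-equivariant $\phi_{h^{-1}}$, and a direct check shows $\phi_h\circ\phi_{h^{-1}}=\phi_{h^{-1}}\circ\phi_h=\id_{\Pi(A)}$. Thus $\phi_h\in\Aut(\Pi(A),T)$ and $\phi_h(u\circ A)=h\circ A$, proving that $\Pi(A)$ is semi-regular. I do not anticipate a serious obstacle here: the content is entirely packaged in Lemma \ref{circ}, which converts the right-multiplication-by-$h$ action on $M$ into a well-defined action on the partition $\{p\circ A:p\in M\}$, and the normalizer hypothesis $h\in N_G(A)$ enters only to guarantee that the same construction applies to $h^{-1}$.
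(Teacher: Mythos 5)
Your proposal is correct and follows essentially the same route as the paper: well-definedness of $\phi_h(p\circ A)=ph\circ A$ via Lemma \ref{circ}, which is the entire content of the paper's (very terse) proof. The extra details you supply --- identifying $\GG(\Pi(A),u\circ A)=A$, continuity via the closed graph of $R_h$, and invertibility via $h^{-1}\in N_G(A)$ --- are exactly the steps the paper leaves implicit.
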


\begin{proof}
Given $h \in N_G(A)$ and $p \in M$ we set $\phi_h(p \circ A) =ph \circ A$.
This is well defined since if $p \circ A = q \circ A$ then, by Lemma \ref{circ},
\begin{gather*}
ph \circ A = p \circ hA = p \circ Ah = (p \circ A)h \\
= (q \circ A)h = q \circ Ah = q \circ hA = qh \circ A. 
\end{gather*}
\end{proof}

\br

\begin{remark}
Since every $\tau$-closed subgroup $A$ of $G$ (see e.g. \cite[Chapter IX]{Gl76})
is the Ellis group of a minimal flow, namely $A = \GG(\Pi(A), u \circ A)$,
we get, in view of Proposition \ref{Pi}, that $N_G(A)/A \cong \Aut(\Pi(A))$,
hence we conclude that for every $\tau$-closed subgroup $A < G$, the 
group $N_G(A)/A$ is realized as 
% every virtual automorphism group is also
an actual automorphism group of some minimal flow.
\end{remark}

We end this section with the following proposition (for information on the 
maximal highly proximal extension of a minimal flow see \cite{AG}) :

\begin{prop}
If $(X,T)$ is SR then so is its maximal highly proximal extension.
\end{prop}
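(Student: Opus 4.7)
The plan is to let $\pi^*:X^*\to X$ denote the maximal highly proximal extension of $(X,T)$, fix a base point $x_0^*\in uX^*$ with $\pi^*(x_0^*)=x_0$ (such a point exists because $\pi^*$ is onto and $T$-equivariant, so replacing any preimage $y$ of $x_0$ by $uy$ does the job), and exploit the fact recorded in Section \ref{sec-basic} that proximal homomorphisms preserve Ellis groups: since $\pi^*$ is highly proximal, hence proximal, $\GG(X^*,x_0^*)=\GG(X,x_0)=A$. In particular $N_G(A)$ is the same whether computed via $X$ or via $X^*$, and the task reduces to realizing every $h\in N_G(A)$ as an element of $\Aut(X^*,T)$.

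Given $h\in N_G(A)$, I would first use the SR hypothesis on $X$ to produce $\psi\in\Aut(X,T)$ with $\psi(x_0)=hx_0$. Next I would observe that $\psi\circ\pi^*:X^*\to X$ is again a highly proximal extension of $X$, because post-composing an HP map with a self-homeomorphism of the target preserves the HP property (its fibers are merely relabelled). The universal property of the maximal HP extension developed in \cite{AG} then furnishes a $T$-homomorphism $\tilde\psi:X^*\to X^*$ with $\pi^*\circ\tilde\psi=\psi\circ\pi^*$; applying the same construction to $\psi^{-1}$ and invoking uniqueness of the lift yields a two-sided inverse, so $\tilde\psi\in\Aut(X^*,T)$.

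It remains to verify that $\tilde\psi(x_0^*)=hx_0^*$. Because $\tilde\psi$ is a continuous $T$-map it commutes with the action of $\beta T$, so $\tilde\psi(x_0^*)=\tilde\psi(ux_0^*)=u\tilde\psi(x_0^*)\in uX^*$; likewise $hx_0^*\in uX^*$ since $h\in uM$ forces $uh=h$. Both points therefore lie in $uX^*=Gx_0^*$, so we may write $\tilde\psi(x_0^*)=gx_0^*$ for some $g\in G$. Applying $\pi^*$ gives $gx_0=\psi(x_0)=hx_0$, whence $h^{-1}g\in A=\GG(X^*,x_0^*)$, which forces $gx_0^*=hx_0^*$, completing the proof of semi-regularity of $X^*$.

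The main obstacle I expect is the middle step: one must invoke the universal property of the maximal HP extension both to lift $\psi$ to $\tilde\psi$ and to guarantee enough uniqueness that the symmetric construction yields a genuine inverse. Verifying that the composition of an HP extension with an automorphism of the base is again an HP extension is an easy check from the definition, and the rest is Ellis-group bookkeeping; so the whole argument really rests on the structural results about $X^*$ available from \cite{AG}.
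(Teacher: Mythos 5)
Your argument is correct, but it follows a genuinely different route from the one in the paper. The paper works with a concrete model: it realizes $X^*$ as the quasifactor $\{p\circ\pi^{-1}(x_0): p\in M\}$ of $M$, defines the lift of a realizing automorphism $\psi\in\Aut(X,T)$ directly by $\psi^*(p\circ\pi^{-1}(x_0))=p\circ\pi^{-1}(\psi(x_0))$, and checks well-definedness from the fact that these sets partition $M$; its stated alternative proof identifies $X^*$ with the Stone space of the regular open algebra of $X$, from which lifting of self-homeomorphisms is automatic. You instead argue model-free: the identity $\GG(X^*,x_0^*)=\GG(X,x_0)=A$ (valid because highly proximal extensions are proximal) shows the two flows have the same virtual automorphism group, and you produce the lift $\tilde\psi$ from the universal property of the maximal highly proximal extension applied to the highly proximal map $\psi\circ\pi^*$. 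This is essentially the categorical form of the paper's alternative proof, and your closing Ellis-group bookkeeping ($\tilde\psi(x_0^*)=gx_0^*$, $gx_0=hx_0$, so $h^{-1}g\in A=\GG(X^*,x_0^*)$ and $gx_0^*=hx_0^*$) is exactly right. The one step you should spell out is the uniqueness needed to conclude that the lift of $\psi^{-1}$ is a two-sided inverse of $\tilde\psi$: their composite is an endomorphism $\theta$ of $X^*$ over $X$, its graph is a minimal subset of $X^*\times X^*$ all of whose points are pairs lying in a common $\pi^*$-fibre, and since $\pi^*$ is proximal an almost periodic proximal pair is diagonal, whence $\theta=\id$. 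The paper's quasifactor computation sidesteps this by exhibiting the inverse explicitly, while your approach has the advantage of not depending on any particular construction of $X^*$.
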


\begin{proof}
One way to describe the maximal highly proximal extension $X^*$ of $X$
is as the qusifactor of $M$ obtained from a homomorphism $\pi : M \to X$, as follows:
$$
X^* = \{p \circ \pi ^{-1}(x_0) : p \in M\},
$$
where $x_0 \in uX$ and $\pi(p) = px_0, \ (p \in M)$.
As with the quasifactor $\Pi(A)$, it can be shown that 
 $\{p \circ \pi ^{-1}(x_0) : p \in M\}$ is a partition of $M$.
As $X$ is SR we have $N_G(A)/A \cong \Aut(X)$ and it suffices to show
that every $\psi \in \Aut(X)$ lifts to an automorphism $\psi^*$ of $X^*$.
We define $\psi^*(p \circ \pi^{-1}(x_0)) = p \circ \pi^{-1}(\psi(x_0)), \ (p \in M)$.
If $p \circ \pi^{-1}(x_0) = q \circ \pi^{-1}(x_0)$, for $p, q \in M$, then 
also $px_0 = qx_0$, whence
$$
\psi(px_0) = \psi(qx_0) \in p \circ \pi^{-1}(\psi x_0) \cap q \circ \pi^{-1}(\psi x_0),
$$
hence $p \circ \pi^{-1}(\psi x_0) = q \circ \pi^{-1}(\psi x_0)$.
(It is not hard to check that $p \circ \pi^{-1}(x) \in X^*$ for every $p \in M$ and every $x \in X$.)

%\br
%
%Then $X_{MHP}=\{p \circ \gamma^{-1} (x_0) : \  p \in M\}$. 
%Let $f$ be an automorphism  of $X$ and define $f^*$
% on $X_{MHP}$ by 
% $$
% f^*(p \circ \gamma^{-1}(x_0))=p \circ  \gamma^{-1}(f(x_0))
% $$
% To show $f^*$ is well defined suppose 
% $p \circ  \gamma^{-1}(x_0) = q \circ \gamma^{-1}(x_0)$. 
% Then $px_0=qx_0$ so $f(p(x_0) = f(qx_0)$,  and 
% $\gamma ^{-1}f(px_0) = \gamma^{-1}(qx_0)$. 
% Hence $p \circ \gamma^{-1}(x_0) \cap q \gamma^{-1}(x_0)$ is non-empty so they are equal.

\br

An alternative proof is as follows:

\br 

If $(X,T)$ is a minimal flow then, as a topological space, the maximal HPI extension of 
$X$, say $X^*$, is the Stone space of the Boolean algebra of regular open sets in $X$
(see e.g. \cite{Zu}).
It thus follows that every self-homeomorphism of $X$ lifts to $X^*$,
and clearly an automorphism of $(X,T)$ lifts to an automorphism of $(X^*,T)$.
\end{proof}

\br

\section{A Koopman representation of the virtual  automorphism group}

When a minimal flow $(X,T)$ is strictly ergodic 
(i.e. it admits a $T$-invariant probability measure and this measure is unique) 
then the group $\Aut(X,T)$ also preserves this measure.
In fact, if $\mu$ is the $T$-invariant measure on $X$ and
$\psi \in \Aut(X,T)$, then clearly the pushforward measure $\psi_*(\mu)$ is $T$-invariant as well,
and thus $\psi_*(\mu) =\mu$ by uniqueness.

The {\em Koopman representation} of $T$ associated to the measure 
preserving system $(X,\Xcal,\mu,T)$ is the representation on the Hilbert space $L_2(\mu)$ 
given by $t \mapsto U_t$, where for $t \in T$ the unitary operator $U_t$
is defined by $U_t(f) = f \circ t^{-1}$.

Now in a special case we are able to show that also the virtual automorphism group $N_G(A)/A$
admits such a faithful representation.

\begin{thm}
Let $(X,T)$ be a minimal, metrizable, point distal, uniquely ergodic flow
with $T$-invariant probability measure $\mu$, and suppose that $X$ is measure-regular,
where the latter property means that the 
$T$-invariant $G_\del$ set $X_0 \subseteq X$ consisting of distal
points, has measure one. Then each element of the virtual group automorphisms of the flow
defines an automorphism of the measure space $(X, \Xcal, \mu)$ and this correspondence
defines a unitary representation of the virtual group of automorphisms as a group
of unitary operators on the separable Hilbert space $L_2(\mu)$.
\end{thm}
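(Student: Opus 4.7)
The plan is to realize each virtual automorphism as a $\mu$-preserving Borel transformation of $X$ by lifting through a proximal SR extension on which the virtual automorphism acts as an honest flow automorphism, and then pushing down via a measure-theoretic isomorphism. Fix a distal base point $x_0 \in uX \cap X_0$ so that $A = \GG(X,x_0)$. By Proposition \ref{Pi}, the universal minimal proximal extension $\pi : Y := \Pi(A) \to X$ is semi-regular, and each coset $[h] \in N_G(A)/A$ is realized as a concrete flow automorphism $\tilde\phi_h \in \Aut(Y,T)$ depending only on the coset $hA$.

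The core step is to show that $\pi$ is a measure-theoretic isomorphism modulo null sets. Measure-regularity gives $\mu(X_0) = 1$, so it suffices to control $\pi$ on distal points. A distal $x \in X_0$ cannot have two proximal partners inside $\pi^{-1}(x)$: any two points in a fiber of a proximal extension are proximal, while a distal lift is proximal only to itself, so as soon as $\pi^{-1}(x)$ contains a distal lift the fiber collapses to a singleton. To produce such a distal lift I would use the $T$-equivariant bijection $uY \cong G/A \cong uX$ given by $gz_0 \leftrightarrow gx_0$ (where $z_0 = u \circ A$): writing $x = gx_0$, the candidate lift is $gz_0$, and one checks that it is distal in $Y$ using distality of $x$ together with proximality of $\pi$. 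This yields a Borel section $s : X_0 \to Y$, and setting $\tilde\mu := s_*\mu$ gives a $T$-invariant Borel probability on $Y$ projecting to $\mu$. Any $T$-invariant probability on $Y$ pushes to $\mu$ by unique ergodicity of $X$ and must concentrate on the singleton-fiber region $\pi^{-1}(X_0)$, hence equals $\tilde\mu$. Thus $(Y, \tilde\mu)$ is uniquely ergodic and $\pi$ is a mod-$0$ measure-theoretic isomorphism.

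With the isomorphism in hand, each $\tilde\phi_h \in \Aut(Y,T)$ preserves $\tilde\mu$ (its pushforward is $T$-invariant, hence equals $\tilde\mu$), and transferring via $\pi$ produces a $\mu$-preserving Borel automorphism $\phi_h := \pi \circ \tilde\phi_h \circ \pi^{-1}$ of $X$, defined off a null set. Setting $U_h f := f \circ \phi_h^{-1}$ defines unitary operators on $L_2(\mu)$, and $h \mapsto U_h$ is a group homomorphism that factors through $N_G(A)/A$ because $\tilde\phi_h = \id_Y$ precisely when $hz_0 = z_0$, i.e.\ when $h \in A$.

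I expect the main obstacle to be the distal-lift argument underlying the measure isomorphism: one must establish that for every $x \in X_0$ the fiber $\pi^{-1}(x)$ is realized as a single distal point of $Y$, rather than merely proving that any distal member would be unique. This requires combining point-distality of $X$, the quasifactor description of $\Pi(A)$ via $p \mapsto p \circ A$, and the proximal nature of $\pi$. Once this is in place, measurability of the section $s$ follows from standard measurable-selection principles applied to the continuous surjection $\pi$, and the remainder reduces to the Koopman formalism applied to the uniquely ergodic flow $(Y, \tilde\mu)$.
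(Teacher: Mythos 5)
Your reduction to the universal minimal proximal extension founders on exactly the step you flag as the crux: the claim that for $x \in X_0$ the fiber $\pi^{-1}(x) \subseteq \Pi(A)$ contains a distal point of $\Pi(A)$, and hence collapses to a singleton, is false. The conditional part of your argument is fine (if a fiber of a proximal extension contains a distal point, the fiber is that point), but the distal lift need not exist. Ellis' two circles flow $\tilde{X}$ over the irrational rotation $(Z,R_\al)$ --- which appears in Section 6 of the paper as $X_{SR}$ of the Sturmian --- is a proximal extension of an equicontinuous flow in which \emph{every} fiber is a doubleton $\{z^+, z^-\}$ of mutually proximal points; consequently no point of $\tilde{X}$ is distal even though every point of the base is. Since $\Pi(A)$ factors onto $\tilde{X}$ over $Z$, the universal proximal extension has non-degenerate fibers over every (distal) point of $Z$, there is no distal section $s$, and your candidate lift $gz_0 = g(u\circ A)$ is in general not a distal point of $\Pi(A)$ (indeed $v g \circ A \neq g\circ A$ for other idempotents $v \in J$, and both lie over the same distal point $gx_0$). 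Without the singleton-fiber claim the asserted mod-$0$ isomorphism has no proof, and the surrounding machinery (unique ergodicity, Borel sections, measurable selection) is itself problematic on the typically non-metrizable compactum $\Pi(A)$, where the hypotheses of the standard selection theorems fail.

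The paper avoids the detour entirely and stays inside $X$: it defines $\phi_h(gx_0) = ghx_0$ directly on $uX = Gx_0 \supseteq X_0$ (well-definedness uses only $h \in N_G(A)$, exactly as in your last paragraph), notes that $\phi_h$ restricted to $X_0$ is a $T$-commuting injective continuous map of a Polish set, pushes $\mu$ forward and invokes unique ergodicity to get $(\phi_h)_*\mu = \mu$, and then passes to the full-measure $\phi_h$-invariant set $\bigcap_{n \in \Z}\phi_h^n(X_0)$ to obtain an automorphism of $(X,\Xcal,\mu)$; the operators $U_h f = f\circ \phi_h^{-1}$ then give the representation, which factors through $N_G(A)/A$ since $\phi_h = \id$ on $X_0$ for $h \in A$. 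To salvage your route you would have to replace ``fibers over $X_0$ are singletons'' by the assertion that the disintegration of the lifted invariant measure is a.e.\ a point mass --- a genuinely different and harder statement, which the direct argument renders unnecessary.
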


\begin{proof}
As usual we pick a point $x_0 \in X_0$, so necessarily $ux_0 =x_0$, then let
$A = \GG(X,x_0)$. Our virtual automorphism group is the the group $N_G(A)/A$.
Clearly $uX = Gx_0 \supseteq X_0$. 
Next define, for $h \in N_G(A)$,  $\phi_h : uX \to uX$ by
$$
\phi_h(gx_0) = ghx_0, \ (g \in G)
$$
 Then the map $\phi_h$ is well defined and it is a homeomorphism of the 
 (usually not even measurable) set $uX$.
 In fact, if $gx_0 = g'x_0$ then $g^{-1}g'x_0 = x_0$, hence $a = g^{-1}g' \in A$ and
 $$
 g'hx_0 = g(g^{-1}g')h x_0 = gah x_0 = g h (h^{-1}ah) x_0 = ghx_0.
 $$
 The continuity of $\phi_h$ follows from the continuity 
 of right multiplication on $G$.
 Moreover, we have $t \phi_h = \phi_h t, \ \forall t \in T$.

% We claim that the set $\phi_h(X_0)$ is again a $T$-invariant $G_\del$ subset of $X$.
% In fact, if $X_0 = \bigcap_{i=1}^\infty U_i$, with $U_i \subset X$ open subsets, then for each $i$, 
% $\phi_h(U_i \cap uX)$
% is an open subset of $uX$, hence has the form $V_i \cap uX$ for some open set $V_i \subset X$.
%% Let $X_1 = \bigcap_{i=1}^\infty V_i \cap uX$.
%% %, again a dense $G_\del$ invariant set.
%% Clearly $\phi_h(X_0) \subseteq X_1$, and applying $\phi_{h}^{-1}= \phi_{h^{-1}}$ to $X_1$
%% we see that $\phi_{h}^{-1}(X_1) \subseteq X_0$. 
%% Thus $\phi_h(X_0) = X_1$
%% is again a $G_\del$ subset of $X$, and it is clearly $T$-invariant, hence dense.
%Let $X_1 = \bigcap_{i=1}^\infty V_i$.
% %, again a dense $G_\del$ invariant set.
% Clearly $\phi_h(X_0) \subseteq X_1 \cap uX$, and applying $\phi_{h}^{-1}= \phi_{h^{-1}}$ to $X_1 \cap uX$
% we see that $\phi_{h}^{-1}(X_1 \cap uX) \subseteq X_0$. 
% Thus $\phi_h(X_0) = X_1 \cap uX$
% is again a $G_\del$ subset of $X$, and it is clearly $T$-invariant, hence dense.

 Now $\phi_h : X_0 \to  \phi_h(X_0)$ is a homeomorphism
 and it pushes the measure $\mu$ on $X_0$ to a measure $(\phi_h)_*(\mu)$ on $\phi_h(X_0)$.
 By uniqueness $(\phi_h)_*(\mu) = \mu$.
 
 In particular $\mu(X_0 \cap \phi_h(X_0)) =1$.
 Similarly $\mu(X_0 \cap \phi_h^n(X_0)) =1$ for every $n \in \Z$ and we conclude that the $T$-invariant
 dense $G_\del$ set $X_\infty = \bigcap_{n \in \Z} \phi_h^n(X_0)$ has measure $1$.
 Since it is also $\phi_h$-invariant this shows that $\phi_h$ is an automorphism of the measure space 
$(X, \Xcal, \mu)$.

Now the composition map $U_h : f \mapsto f \circ \phi^{-1}_h$ defines a unitary operator on
$L_2(\mu)$ and the map $h \mapsto U_h$ from $N_G(A)/A \to \Ucal(L_2(\mu))$ is
the desired unitary representation.
\end{proof}

\br

\section{Questions}

We conclude with the following list of questions:

\begin{enumerate}
\item
Given a $\tau$-closed subgroup $A$ of $G$ (see e.g. \cite[Chapter IX]{Gl76}), 
when is there a minimal {\bf metric} pointed
flow $(X, x_0, T)$ with $\GG(X,x_0)= A$ ?
\item
Given a minimal metric pointed flow $(X, x_0, T)$ when is there a metric SR proximal extension
of $X$ ?
\item (A. Zucker) 
For a minimal metric flow $(X,T)$ is there always a metrizable extension which is coalescent ?
(Of course $(M,T)$ is a non-metrizable one, when $T$ is infinite.)
\item
When is there a unitary representation of $N_G(A)/A$ on a separable Hilbert space ?
\item
In \cite{GTWZ} the authors show that for any infinite countable group $T$:
(i) There is a minimal metric flow $(X,T)$ such that $\Aut(X,T)$ 
 embeds every compact metrizable group, and 
 (ii) The group $\Aut(M(T), T) \cong G$ has cardinality $2^\mathfrak{c}$, the largest possible one.  
Is there an algebraic obstruction on an abstract group $\Ga$ of cardinality $\leq 2^\mathfrak{c}$ to be isomorphic 
to a (virtual) automorphism group of a minimal $T$-flow ? 
(Regarding this question see the work \cite{CP} and remark 11.6 in \cite{GTWZ}.)
\end{enumerate}
\br

\bibliographystyle{amsalpha}

\end{document}